\newtheorem{thm}{Theorem}[section]
\newtheorem{lem}[thm]{Lemma}
\newtheorem{cor}[thm]{Corollary}
\newcommand\calC{\ensuremath{\mathcal{C}}}
\newcommand{\N}{{\ensuremath \mathbb{N}}}
\begin{document}
\begin{center}

\LARGE The game chromatic number of dense random graphs
\vspace{8mm}

\Large{
\begin{tabular}{ccc}
Ralph Keusch & \quad & Angelika Steger \\
{\small{rkeusch@inf.ethz.ch}} & \quad & {\small{steger@inf.ethz.ch}}
\end{tabular}
}
\vspace{5mm}

\large
  Institute of Theoretical Computer Science \\
  ETH Zurich, 8092 Zurich, Switzerland
\vspace{8mm}

\small

\begin{minipage}{0.9\linewidth}
\textsc{Abstract.}
Suppose that two players take turns coloring the vertices of a given graph $G$ with $k$ colors. In each move the current player colors a vertex such that neighboring vertices get different colors. The first player wins this game if and only if at the end, all the vertices are colored. The \textit{game chromatic number $\chi_g(G)$} is defined as the smallest $k$ for which the first player has a winning strategy.

Recently, Bohman, Frieze and Sudakov [Random Structures and Algorithms 2008] analysed the game chromatic number of random graphs and obtained lower and upper bounds of the same order of magnitude. In this paper we improve existing results and show that with high probability, the game chromatic number $\chi_g(G_{n,p})$ of dense random graphs is asymptotically twice as large as the ordinary chromatic number $\chi(G_{n,p})$.
\end{minipage}

\vspace{5mm}
\end{center}


\section{Introduction}\label{sec:intro}

Consider the following Maker-Breaker game played on a graph~$G$ whose vertices are uncolored at the beginning. During the game Maker and Breaker alternately take turns and color one vertex per move such that the coloring remains proper, i.e.,\ two neighbors never receive the same color. Maker's goal is to ensure that all vertices get colored, while Breaker's aim is to avoid this by reaching a partial coloring that cannot be extended any more. Maker has the first move.

The {\em game chromatic number} $\chi_g(G)$ is defined as the smallest number of colors for which Maker has a winning strategy, no matter how Breaker plays. Obviously, $\chi_g(G)$ is at least as large as the chromatic number~$\chi(G)$. On the other side, Maker always wins the game if the number of colors is larger than the maximum degree of $G$, because then no vertex can run out of colors. Therefore the parameter $\chi_g(G)$ is well-defined. Usually, $\chi_g(G)$ is larger than $\chi(G)$. 

In fact, the difference between the two parameters can be very large. Consider for example the complete bipartite graph $B_{n,n}$ minus a perfect matching~$M$. The chromatic number of this graph is two, while Breaker has a winning strategy whenever the number of colors is less than~$n$: if Maker colors some vertex~$v$ with color~$i$ then Breaker uses the same color on the vertex~$w$ which is matched to~$v$ in the matching~$M$. Color~$i$ can henceforth not be used on any other vertex, and the claim follows by induction.

In this paper, we study the game chromatic number of the Erdős–Rényi random graph model $G_{n,p} = (V,E)$. 
We assume throughout the paper that $p=p(n) \le 1 - \eta$, where $\eta > 0$ is an arbitrarily small, fixed constant. Bohman, Frieze and Sudakov \cite{suda} determined upper and lower bounds for $\chi_g(G_{n,p})$ for a wide range of edge probabilities $p$. Let $b = \frac{1}{1-p}$ and note that for $p=o(1)$ we have $\log_b np = \frac{\log np}{\log b}=(1+o(1))\frac{\log np}{p}$. (We use $\log x$ to denote the logarithm to base $e=2.71..$.) 


\begin{thm}[Theorem 1.1 in \cite{suda}]\label{thm:bfs}\hfill

\begin{list}{}{\leftmargin0.6cm\labelwidth0.4cm\labelsep0.2cm\topsep0pt\itemsep0pt}
\item[(a)\hfill] There exists a constant $K > 0$ such that for $\varepsilon > 0$ and $p \geq (\log n)^{K\varepsilon ^{-3}}/n$, it holds with high probability
\[\chi_g(G_{n,p}) \geq (1-\varepsilon)\frac{n}{\log_b{np}}.\]
\item[(b)\hfill] If $\alpha > 2$ is a constant, $K=\max\{\frac{2\alpha}{\alpha-1},\frac{\alpha}{\alpha-2}\}$ and $p \geq (\log n)^K/n$, then with high probability
\[\chi_g(G_{n,p}) \leq \alpha \frac{n}{\log_b{np}}.\]
\end{list}
\end{thm}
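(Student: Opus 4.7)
My plan is to establish (a) and (b) separately, exploiting the fact that $G_{n,p}$ with dense $p$ has independence number concentrated around $2\log_b(np)$, vertex degrees concentrated around $np$, and common neighborhoods of small sets concentrated around $np^s$ where $s$ is the set size.

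For the lower bound (a), I would give an explicit strategy for Breaker using $k=(1-\varepsilon)\frac{n}{\log_b(np)}$ colors. The intuition behind the factor of two (versus the standard bound $\chi(G_{n,p})\sim \frac{n}{2\log_b(np)}$) is that Breaker wants to force each color class to have size at most $(1+o(1))\log_b(np)$—half the maximum independent set size. I would try a mirroring strategy: Breaker maintains a pool of non-adjacent pairs and, when Maker plays color $c$ on some vertex $v$, Breaker responds by playing color $c$ on a partner vertex $w$ chosen so that $N(v)\cup N(w)$ blocks color $c$ at the maximum number of remaining uncolored vertices. A potential-function argument (say, tracking $\Phi=\sum_{v}\binom{|L(v)|}{2}$, where $L(v)$ is the set of colors already forbidden at the uncolored vertex $v$) would then show that some vertex reaches $|L(v)|=k$ before all $n$ vertices are colored. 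Proving that Breaker always has a legal move of this type—i.e.\ that ample valid pairs survive throughout the game—would rely on the concentration of joint neighborhoods in $G_{n,p}$ and on the lower-bound condition $p\ge (\log n)^{K\varepsilon^{-3}}/n$.

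For the upper bound (b), I would specify a strategy for Maker using $k=\alpha\frac{n}{\log_b(np)}$ colors with $\alpha>2$. The natural candidate is a first-fit strategy augmented with emergency rescues: Maker prioritizes the uncolored vertex $v$ that maximizes $|L(v)|$ and colors it with any color not in $L(v)$ (such a color exists as long as the invariant $|L(v)|<k$ holds). The slack $\alpha-2$ is used to absorb Breaker's adversarial moves. A martingale argument along the course of the game would show that the maximum $|L(v)|$ stays below $k$ with high probability. Intuitively, each Breaker move can increment $|L(v)|$ by at most 1 for the $\sim np$ uncolored neighbors of the played vertex, and in $G_{n,p}$ the average $|L(v)|$ grows at a rate balanced by Maker's rescues; the polylogarithmic lower bound on $p$ in the hypothesis is what drives the concentration uniformly over all $n$ vertices.

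The main obstacle, for both parts, is bounding $\max_{v}|L(v)|$ under the opponent's worst play. In (a) the delicate step is showing that Breaker can consistently find non-adjacent pairs whose joint neighborhood adds a new color to many $L(v)$'s, even late in the game when most vertices are colored; this may require the pool of target pairs to be refreshed adaptively and to use common-neighborhood concentration for sets up to size $\Theta(k)$. In (b) the difficulty is that Breaker can concentrate attacks on a few chosen vertices, so Maker's rescue policy must be proved sufficient against such concentrations—this likely requires a finer invariant than $\max|L(v)|<k$, perhaps controlling the entire distribution of $|L(v)|$ together with the evolving codegree structure in the uncolored subgraph.
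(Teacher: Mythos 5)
You are being asked to prove a statement that this paper does not prove at all: Theorem~\ref{thm:bfs} is quoted verbatim from Bohman, Frieze and Sudakov \cite{suda} and is used here only as a black box (part (a) supplies the matching lower bound for the Corollary). So there is no in-paper proof to compare against, and your attempt must be judged on its own. As it stands it is a research plan rather than a proof: every quantitative step is deferred to ``a potential-function argument would show'' or ``a martingale argument would show,'' and the steps that are deferred are precisely the hard ones.

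For (a), the engine of the real argument is not that some $|L(v)|$ reaches $k$ via a potential such as $\sum_v\binom{|L(v)|}{2}$; it is that Breaker can force every color class to become \emph{closed} (unusable at every remaining uncolored vertex) while it still has size $(1+o(1))\log_b np$, so that at most $k\cdot(1+o(1))\log_b np<n$ vertices are ever colored. The point of Breaker echoing Maker's color is that half of each color class is then chosen adversarially, so the set of vertices still eligible for that color shrinks geometrically by $(1-p)$ per insertion, as for a \emph{greedy} independent set of size $\sim\log_b np$, rather than a maximum one of size $\sim 2\log_b np$; the crux is to prove this happens against \emph{every} Maker strategy, with the error term controlled by the hypothesis $p\ge(\log n)^{K\varepsilon^{-3}}/n$, and your sketch does not engage with this. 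For (b), prioritizing the vertex with the most forbidden colors is not enough: one must also control the color-class sizes (Maker must spread colors, e.g.\ always use a least-used available color), since the only handle on $|L(v)|\le\sum_i\mathds{1}[N(v)\cap \calC_i\neq\emptyset]$ comes from bounding the $|\calC_i|$; this is exactly where $\alpha>2$ enters (each class has average size $\log_b np/\alpha<\tfrac12\log_b np$, leaving $k(np)^{-1/\alpha}\gg$ slack available colors per vertex). Moreover the ``martingale argument'' cannot be run as stated because the partial coloring is chosen adversarially, not randomly: one needs a union bound over all reachable partial colorings (compare the $(k+1)^n$ union bound in Lemma~\ref{l:sudakov} of this paper) or a carefully exposed randomness argument, and you provide neither. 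So both halves have genuine gaps at their respective cores.
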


Shortly after, Frieze, Haber and Lavrov obtained almost as good estimates for sparse random graphs \cite{neues}. Note that for the ordinary chromatic number it is well-known by the results of Bollobás and Łuczak  \cite{bollo, lucz} that with high probability, $\chi(G_{n,p}) = (1+o(1))\frac{n}{2\log_b{np}}$. Hence on random graphs, the two parameters have the same order of magnitude. 

In this paper, we improve the upper bound for dense random graphs.

\begin{thm}\label{thm:main}
Let $p \ge e^{-o(\log n)}$ and $b := \frac{1}{1-p}$. Then with high probability,
\[\chi_g(G_{n,p}) \leq (1+o(1))\frac{n}{\log_b{np}}.\]
\end{thm}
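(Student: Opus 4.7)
The target bound $(1+o(1))n/\log_b np$ matches precisely the number of colors produced by greedy first-fit coloring on $G_{n,p}$: color classes of average size $\log_b np$, which is roughly half the typical independence number $2\log_b np$. The plan is to exhibit an explicit strategy for Maker that essentially realizes this greedy decomposition despite Breaker's adversarial play. Throughout, I will invoke standard pseudo-random properties of $G_{n,p}$, valid w.h.p.\ for $p \ge e^{-o(\log n)}$: for every pair of sufficiently large subsets $S, T \subseteq V$, the quantity $|T \setminus N(S)|$ concentrates around $|T|(1-p)^{|S|}$, and the maximum degree from outside into $S$ is $(1+o(1))|S|p$.

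Maker's strategy is phase-based. In phase $c$, Maker focuses on building the color class $V_c$: on his turn he picks any uncolored vertex in the active pool $P_c := \{u \in U : N(u) \cap V_c = \emptyset\}$ and colors it with $c$, and he advances to phase $c+1$ once the pool is empty. By the pseudo-random bounds, each Maker move shrinks $|P_c|$ from $m$ to about $(1+o(1))m/b$ (removing the chosen vertex together with its $\approx mp$ neighbors in the pool), and even Breaker's most aggressive play inside $V_c$ shrinks $|P_c|$ by a comparable factor. Hence a single phase lasts $\Theta(\log_b|U|)$ vertex-colorings, producing a class of size $(1-o(1))\log_b np$ whenever Breaker's contributions stay inside $V_c$. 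Summing over phases then yields the desired $(1+o(1))n/\log_b np$ total colors.

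The main obstacle is that Breaker need not play inside $V_c$ at all; instead he can spend his moves opening previously unused color classes, each containing a single vertex, threatening to inflate the color count up to $n/2$. To defeat this, Maker's rule must be made \emph{adaptive}: upon advancing from a phase, instead of opening a brand-new color, Maker selects the currently smallest non-saturated color class --- including any of Breaker's freshly opened singletons --- and extends it up to the target size $\log_b np$. Since the palette has only $k = (1+o(1))n/\log_b np$ colors altogether, Breaker can open at most $k$ fresh classes, after which his moves are forced to contribute to existing classes. The technically hard part will be to prove that under this absorbing rule no color class ever remains stuck underfilled; the natural tool is a potential function tracking the total deficit $\sum_c (\log_b np - |V_c|)^+$, controlled by Azuma--Hoeffding concentration on the vertex-exposure martingale of $G_{n,p}$, applied uniformly over the polynomially many relevant partial color classes. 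The tight $(1+o(1))$ factor, as opposed to the constant $\alpha > 2$ of Bohman--Frieze--Sudakov, is extracted from the fact that pool and maximum-degree deviations are $o(|P_c|)$ and $o(|S|p)$ respectively, so Maker needs no constant-factor safety margin when deciding to advance a phase.
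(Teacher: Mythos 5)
There is a genuine gap, and it is the central one: your strategy and your potential function only control the \emph{sizes of the color classes}, whereas Maker does not lose this game by ``using too many colors'' --- the palette of $k$ colors is fixed in advance, and Maker loses at the moment some uncolored vertex $v_0$ has all $k$ colors represented in its neighborhood. Your proposal never tracks the quantity $a(v,\mathcal{C})=|\{i: N(v)\cap \mathcal{C}_i=\emptyset\}|$ for uncolored $v$, and your strategy gives Maker no mechanism to rescue a vertex whose list of available colors is running out. This is fatal: against the phase strategy as written, Breaker picks a vertex $v_0$ that Maker is not about to color and, on each of his turns, colors a fresh uncolored neighbor of $v_0$ with a color not yet present in $N(v_0)$. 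Since $v_0$ has $(1+o(1))np \gg k$ neighbors (for the dense range considered) and early in the game almost every color is still available at almost every vertex, Breaker exhausts all $k$ colors in $N(v_0)$ within about $k=o(n)$ of his own moves, long before your phases reach $v_0$. This is exactly the mechanism behind the $B_{n,n}$-minus-matching example in the introduction, and it is the reason $\chi_g$ exceeds $\chi$ in the first place; a proof that ignores it cannot be repaired by concentration estimates on class sizes.

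The paper's argument is organized entirely around this danger. Maker interleaves two kinds of moves: ``second type'' moves that grow the smallest active color class (this is essentially your absorbing rule, and the balls-and-bins analysis of Lemma~\ref{l:ballgame} and Corollary~\ref{cor:ball} plays the role of your deficit potential), and, crucially, every $N$-th move a ``first type'' move that colors an uncolored vertex with the fewest available colors. The endgame starting from the last such rescue move is then modelled as a generalized box game (Lemmas~\ref{l:boxgame} and~\ref{l:applyboxgame}) played on the available-color lists $A(v,\mathcal{C})$, and the technical heart (Lemmas~\ref{l:shuffle}, \ref{l:sudakov} and the \textsc{ColorArranging} algorithm) is showing that these lists remain large and sufficiently spread out relative to Breaker's power. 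Some version of such a rescue rule, together with an argument that it suffices, would have to be added to your plan. As a secondary issue, your appeal to Azuma--Hoeffding over ``polynomially many relevant partial color classes'' is not justified: the adversary generates exponentially many reachable partial colorings, and the paper's union bound in Lemma~\ref{l:sudakov} has to beat a factor of $(k+1)^n$.
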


In particular, this result holds for constant values of $p$. Together with the first statement of Theorem~\ref{thm:bfs} it implies the asymptotic value of the game chromatic number for dense random graphs:

\begin{cor}
Let $p \ge e^{-o(\log n)}$ and $b := \frac{1}{1-p}$. Then with high probability,
\[\chi_g(G_{n,p}) = (1+o(1))\frac{n}{\log_b{np}} =  (2+o(1))\chi(G_{n,p}).\]
\end{cor}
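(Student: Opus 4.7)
The plan is to sandwich $\chi_g(G_{n,p})$ between a matching upper bound from Theorem~\ref{thm:main} and a lower bound extracted from Theorem~\ref{thm:bfs}(a), and then to rewrite the resulting expression via the classical formula for $\chi(G_{n,p})$. Since both game-chromatic theorems are already stated, the corollary is essentially a bookkeeping exercise; the only mildly subtle point, which I would address first, is the matching of parameter ranges. The genuinely hard work in the paper lies in proving Theorem~\ref{thm:main} itself, not in this corollary.

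First I would use Theorem~\ref{thm:main} as a black box, giving $\chi_g(G_{n,p}) \le (1+o(1))\,n/\log_b(np)$ with high probability throughout the range $p \ge e^{-o(\log n)}$. To obtain a matching lower bound I would apply Theorem~\ref{thm:bfs}(a) with $\varepsilon = \varepsilon_n \to 0$ chosen slowly enough that its hypothesis $p \ge (\log n)^{K\varepsilon_n^{-3}}/n$ is still satisfied. Taking logarithms, this hypothesis reduces to $\varepsilon_n^3 \ge K \log\log n / \log(np)$, and since $\log(1/p) = o(\log n)$ implies $\log(np) = (1-o(1))\log n$, any $\varepsilon_n$ decaying more slowly than $(K\log\log n/\log n)^{1/3}$ is admissible. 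This yields $\chi_g(G_{n,p}) \ge (1-o(1))\,n/\log_b(np)$ with high probability, and combined with Theorem~\ref{thm:main} gives the first equality of the corollary.

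Finally I would invoke the classical theorem of Bollobás~\cite{bollo} and Łuczak~\cite{lucz}, namely $\chi(G_{n,p}) = (1+o(1))\,n/(2\log_b(np))$ with high probability, which is valid throughout the dense regime under consideration. Dividing the already-established first equality by this formula and intersecting the two high-probability events yields the second equality $\chi_g(G_{n,p}) = (2+o(1))\chi(G_{n,p})$, completing the proof.
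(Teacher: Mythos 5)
Your proposal is correct and matches the paper's (implicit) argument: the corollary is stated as an immediate consequence of combining Theorem~\ref{thm:bfs}(a) with Theorem~\ref{thm:main} and the Bollob\'as--\L{}uczak formula for $\chi(G_{n,p})$, which is exactly your sandwich. Your extra care in letting $\varepsilon=\varepsilon_n\to 0$ slowly enough that the hypothesis of Theorem~\ref{thm:bfs}(a) remains satisfied is the one point the paper glosses over, and you handle it correctly.
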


Note that for any constant $\xi > 0$, our lower bound $p \ge e^{-o(\log n)}$ implies $(np)^\xi p \gg \log np \cdot \log^2 n$ and thus
\begin{equation}\label{eq:boundonp}
(np)^\xi \gg \log_b np \cdot \log^2 n,
\end{equation}
which we will use several times in the proof of Theorem~\ref{thm:main}. In Section 2 we give an overview of our proof strategy and describe the most important concepts. Afterwards Section 3 contains the main part of the technical work and finishes the proof of Theorem~\ref{thm:main}.

\section{Outline of Proof Strategy}

Suppose that $p \ge e^{-o(\log n)}$. We need to show that for any constant $\alpha > 1$, arbitrarily close to one (but not equal), and a number of colors $k=\alpha\frac{n}{\log_b np}$ Maker has a strategy so that he wins the game with probability $1-o(1)$. For the rest of the paper we assume that~$p$ and~$\alpha$ are fixed and~$k$ is chosen as above.

By $\mathcal{C} = (\calC_1, \calC_2, \ldots, \calC_k)$ we denote a collection of pairwise disjoint sets, where $\calC_i$ is the set of all vertices which have been assigned with color~$i$. Note that we do not require that the sets~$\calC_i$ partition the vertex set. In this way we can view $\calC$ as the partial coloring obtained after some given number of vertices have been colored.

For a vertex~$v$ we denote by $A(v,\mathcal{C})$ the set of all colors which are still available at~$v$ with respect to the partial coloring~${\cal C}$. That is,
\[A(v,\mathcal{C}) = \{i \in \{1, \ldots, k\}: N(v) \cap \calC_i = \emptyset \},\]
where $N(v)$ denotes the neighborhood of $v$. Furthermore, define
\[a(v,\mathcal{C}) = \left|A(v,\mathcal{C}) \right|.\]

During the evolution of the game, more and more vertices get colored and the sets $A(v,\calC)$ will shrink. Maker needs to avoid that a set $A(v,\mathcal{C})$ of an uncolored vertex gets empty. This indicates that the coloring game bears some relation to the so-called \textit{box game} introduced by Chvatal and Erd\H{o}s~\cite{erdoes}, cf.\ also Hamidoune and Las~Vergnas~\cite{corr} for some corrections and generalizations. In this box game, Maker and Breaker take turns in claiming previously unclaimed elements of some given, pairwise disjoint sets. Maker wants to claim at least one element from each set, while Breaker's goal is to prevent this, thus Breaker wants to claim all elements of at least one set. (Note that in this description we have deliberately changed the roles of Maker and Breaker in comparison to the original setting, as this better fits our purposes.)
A natural strategy for Maker is to play greedily, i.e., to always claim an element from the set that currently has the smallest number of elements. If Maker claims an element from a set, we can remove this set from the game as it is not dangerous any more. Denote by $B(A_1, \ldots, A_k)$ the box game on $k$ pairwise disjoint sets $A_1, \ldots, A_k$ where Maker has the first move. If some conditions on the sets $A_i$ are fulfilled, the greedy strategy allows Maker to win the game.  Before presenting details, we introduce some generalizations to the box game so that we can apply it for the analysis of the game chromatic number. 

If Maker colors a vertex~$v$ in the coloring game, then this color has to be removed from the sets $A(w,\mathcal{C})$ for all neighbors $w\in N(v)$. The coloring of $v$ corresponds to a move of Maker in the box game, while the removals correspond to a move of Breaker. Assume we know that each color appears in at most $q+1$ sets. Then we could enhance the power of Breaker by allowing him to remove from at most $q$ sets $A(w,\calC)$ an arbitrary color. With that Breaker has more power, but if we can show that Maker wins this generalized game, then he will also win the original coloring game. 

In the coloring game, Breaker colors vertices as well in his turns, which should be translated to a Maker-move in the box game. We model this as follows: we allow Breaker to steal every other move from Maker. That is, instead of Maker choosing an element (from the smallest set) we allow Breaker to claim an element from an arbitrary set, label it as a Maker element, remove the set (that contained the element) from the game, and then proceed with his own move by eliminating at most $q$ elements from the remaining sets. Note that this essentially means that after one (real) Maker move Breaker may remove one set plus $2q$ elements from the remaining sets.

If we allow Breaker to steal one of Maker's moves, we might as well allow him to steal more than one. We denote by $B(A_1, \ldots, A_k;q,z)$ the box game on~$k$ pairwise disjoint sets $A_1, \ldots, A_k$ where Maker has the first move and
Breaker claims at most~$q$ elements per move and is allowed to steal all but every $z$-th of  Maker's moves. 
In the next lemma we formulate a criterion when Maker can win a game $B(A_1, \ldots, A_k;q,z)$. Thereby, we also allow Maker to play only $d$-greedily, meaning that he always chooses a set that contains at most~$d$ elements more than the currently smallest available set.

\begin{lem}\label{l:boxgame}
Let $q,d,z \in \mathds{N}$ and let $f(1,q,d) := zq+d$ and 
\[f(k,q,d) := (zq+d)k\left(1+\sum_{i=1}^{k-1}\frac{1}{i}\right)\qquad\text{for $k > 1$.}\]
If $\sum_{i \in I} |A_i| > f(|I|,q,d)$ holds for all non-empty subsets $I \subseteq \{1, \ldots, k\}$, then Maker wins the game $B(A_1, \ldots, A_k; q,z)$ by playing $d$-greedily.
\end{lem}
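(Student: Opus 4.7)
The plan is to proceed by induction on $k$, arguing that Maker's $d$-greedy strategy preserves the hypothesis after each natural ``phase'' of the game. For the base case $k = 1$, the assumption $|A_1| > zq + d \geq 1$ makes $A_1$ non-empty, and Maker's first move (whether played by himself or stolen by Breaker) wins the game by marking an element of $A_1$.

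For the inductive step, I would partition the game into phases, each consisting of one real Maker move together with the (up to $z - 1$) Maker moves stolen by Breaker around it, and the interleaved Breaker moves. A phase removes $z' \leq z$ sets from the active pool (one by Maker's $d$-greedy pick, at most $z - 1$ by Breaker's steals) and allows Breaker to delete at most $zq$ elements from the remaining sets. Maker's first real move is legal because the singleton case of the hypothesis forces $\min_i |A_i| > zq + d > 0$. The goal is then to show that after a phase, the $k - z'$ surviving sets still satisfy the hypothesis for their own cardinality, so that the induction may be applied (and if $k - z' = 0$ Maker has already won).

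To verify the hypothesis on the post-phase state, fix any non-empty subset $I$ of surviving sets and let $I^\star = I \cup \{\text{removed sets in this phase}\}$, so $|I^\star| = |I| + z'$. The hypothesis applied to $I^\star$ yields $\sum_{i \in I^\star} |A_i| > f(|I| + z', q, d)$, and the post-phase sum over $I$ is at least $\sum_{i \in I^\star} |A_i| - R - zq$, where $R$ is the total pre-phase size of the $z'$ removed sets. Hence the hypothesis survives for $I$ provided
\[
R + zq \;\leq\; f(|I| + z', q, d) - f(|I|, q, d).
\]
A direct calculation using $f(m, q, d) = (zq+d)\, m\, \bigl(1 + \sum_{i=1}^{m-1} \tfrac{1}{i}\bigr)$ gives $f(m+1, q, d) - f(m, q, d) = (zq+d)\bigl(2 + \sum_{i=1}^{m} \tfrac{1}{i}\bigr)$, so the right-hand side of the displayed inequality grows at a harmonic rate in $|I|$, which is the correct order to absorb the per-phase losses.

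The main obstacle I foresee is bounding $R$. Maker's own contribution is at most $\min_i |A_i| + d$, a quantity the singleton hypothesis controls (it gives $\min_i |A_i| > zq + d$); but Breaker may steal sets of arbitrarily large size, so $\sum_{\text{stolen}} |A_i|$ has no immediate upper bound from the hypothesis alone. The technical heart of the proof will therefore be a careful worst-case analysis showing that, for every surviving subset $I$, Breaker's best stealing pattern is effectively constrained by the very form of the hypothesis on $I^\star$: a judicious choice of which $z' - 1$ stolen sets to associate with $I^\star$ (together with Maker's freedom to break ties within the $d$-range) forces $R$ to stay within the harmonic budget $f(|I| + z', q, d) - f(|I|, q, d) - zq$. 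Once this bookkeeping is carried out, the induction closes and Maker wins the game.
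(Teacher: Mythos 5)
Your skeleton matches the paper's: induction over periods of $z$ Maker moves (one real, up to $z-1$ stolen), with the harmonic increments of $f$ absorbing the per-period losses and $d$-greediness controlling the size of Maker's chosen set. However, the central step of your plan — verifying the hypothesis on the surviving sets — is set up in a way that cannot be completed. You augment a surviving subset $I$ to $I^{\star}=I\cup\{\text{all $z'$ removed sets}\}$ and then need $R+zq\le f(|I|+z',q,d)-f(|I|,q,d)$, where $R$ is the total pre-phase size of the removed sets. As you yourself observe, Breaker's stolen sets can be arbitrarily large, so $R$ admits no such bound; no ``judicious choice'' of which stolen sets to associate with $I^{\star}$ can fix this, because your bookkeeping charges their full sizes against a harmonic budget of order $(zq+d)\log|I|$. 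The gap is not a missing technical estimate but a wrong accounting scheme.

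The paper's resolution is to augment by \emph{only one} set: for a surviving subset $I'$ of size $\ell$, take $I=I'\cup\{j\}$ where $A_j$ is the single set Maker claimed in his real move, and apply the original hypothesis to this $I$. The stolen sets lie outside $I$, so their disappearance costs nothing for the sum over $I'$; the only losses are $|A_j|$ and the at most $zq$ elements Breaker deletes. Writing $t=\sum_{i\in I}|A_i|$, the $d$-greedy rule gives $|A_j|\le\lfloor t/(\ell+1)\rfloor+d$ (the minimum is at most the average over $I$), hence
\[
\sum_{i\in I'}|A'_i|\;\ge\;\frac{\ell}{\ell+1}\,t-d-zq\;>\;\frac{\ell}{\ell+1}f(\ell+1,q,d)-(zq+d)\;=\;f(\ell,q,d),
\]
where the last equality is the telescoping identity for $f$. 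This closes the induction without ever bounding the stolen sets. You should discard the quantity $R$ entirely and rework the inductive step along these lines; the rest of your outline (base case, legality of Maker's moves, survival during the first period because $|A_i|>zq$) is sound.
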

\begin{proof}
We prove this statement by induction, always looking at periods of~$z$ moves of Maker, the first of these moves being a real Maker's move, the remaining $z-1$ stolen by Breaker. Playing $d$-greedily, Maker claims in his real moves always an element from a set that contains at most $d$ more elements than the currently smallest set. 

For the induction assumption we show that Maker cannot lose in the first period. 
The first period consists of~$z$ Maker moves, the last $z-1$ of which stolen by Breaker. In this period Breaker can claim at most $zq$ elements from the $k-z$ sets remaining after the~$z$ Maker moves (real or stolen). By definition of the function $f$ we know that $|A_i| > zq$ holds for all~$i$, therefore no set $A_i$ runs out of elements during the first moves of Breaker, and Maker doesn't lose in the first period of the game. 

If $k \leq z$, this proves the statement. So assume $k > z$ and let $\tilde{I}$ be the remaining index set when Maker plays a real move the next time. Then  $|\tilde{I}| = k-z$. For $i \in \tilde{I}$ we denote by $A'_i$ the remaining set after the first~$z$ moves of Maker/Breaker. Let $I' \subseteq \tilde{I}$ be any non-empty subset of size $\ell = |I'| > 0$, and let $I = I' \cup \{j\}$, where $A_j$ denotes the set which Maker claimed in his first move. Recall that we assumed that Maker claims in his first move an element from a set that contains at most $d$ elements more than the currently smallest set. We thus deduce that for $t := \sum_{i \in I} |A_i|$, $t^{\ast} := \sum_{i \in I'} |A'_i|$ satisfies
\begin{align*}
t^{\ast} & \;\geq\; t - |A_j| - zq \;\geq\; t - \lfloor t/(\ell+1)\rfloor  -d - zq \;\geq\; \frac{\ell}{\ell+1} t - d-zq\\
& \;>\; \frac{\ell}{\ell+1} f(\ell+1,q,d)-d-zq \;=\; (zq+d)\ell\left(1+\sum_{i=1}^\ell \frac{1}{i}\right) - d-zq\\
& \;=\; (zq+d)\ell\left(1+\sum_{i=1}^{\ell-1} \frac{1}{i}\right) \;=\; f(|I'|,q,d).
\end{align*}
Together with the induction hypothesis this shows that Maker will not lose the game in the later part of the game.
\end{proof}

We are now ready to define Maker's strategy in the coloring game. One of Maker's goals is to ensure that the color classes grow almost uniformly. Clearly, Maker cannot achieve this completely, as Breaker can play arbitrarily. But at least he can make sure that no color class is too small. Let $N$ be a constant chosen appropriately later. Then Maker's strategy is the following:


\begin{list}{$\bullet$}{\leftmargin0.7cm\labelsep0.2cm}
\item In every $N$-th move, Maker chooses an uncolored vertex $v$ such that $a(v,\mathcal{C})$ is minimal, where $\mathcal{C}$ is given by the current color classes, and assigns any color $i \in A(v,\mathcal{C})$ to $v$. We call this  a \textit{move of first type}.
\item In all his other turns, Maker chooses a color $i$ such that $|\calC_i|$ is minimal among all colors that can still be used somewhere and assigns $i$ to an uncolored vertex $v$ with $i \in A(v,\mathcal{C})$. We call this a \textit{move of second type}.
\end{list}

Note that we may assume that Maker's strategy is deterministic: we fix some arbitrary ordering on the vertices, so that we can break ties uniquely. With these preliminaries at hand we are now ready to outline the main idea of our proof strategy. 

Assume Breaker wins at time~$t$, i.e., assume that after $t-1$ vertices have been colored, we have a vertex $v_0$ that is still uncolored and for which all $k$ colors appear in the neighborhood $N(v_0)$. To reach a contradiction we will then define a time $t'<t$ and argue that we may view the coloring game between times $t'$ and $t$ as a box game, where the sets $A_i$ correspond to the sets $A(v,\calC)$ for the vertices that were colored between times $t'$ and $t$ plus the vertex $v_0$ (that ran out of colors). To see that this box game is a Maker's win (and that therefore the coloring game could not have stopped at time $t$), we will argue that the conditions of Lemma~\ref{l:boxgame} are satisfied. For this we need that the sets $A(v,\calC)$ are large {\em in comparison} to the power of Breaker. Recall that the power of Breaker (the  parameter $q$ in Lemma~\ref{l:boxgame}) corresponds to how often a color appears in the sets $A(v,\calC)$, maximized over all colors. Clearly, the larger $t$ (and thus $t'$), the smaller $q$ and the sets $A(v,\calC)$.  To carefully balance these effects we partition the game into phases, parametrized by a parameter~$h$. In each phase we will use different bounds for the size of the sets $A(v,\calC)$ and the power of Breaker.  Set $b=1/(1-p)$ as in Theorem~\ref{thm:main} and let  $\xi>0$ be a constant which we will define later. We define three functions as follows:
\begin{align}\label{eq:def:fcts}
\beta(h) \;&:=\; \frac{\alpha \xi n (np)^{-h}}{10\log_b np} = \Theta\left(\frac{(np)^{1-h}}{\log np}\right),\nonumber\\
\gamma(h)\;&:=\; \frac{10n\log n}{\beta(h)} = \Theta\left(\frac{(np)^h}{p}\log n \log np\right)\quad\text{and}\\
q(h) \;&:=\; \frac{\beta(h)}{(\log{n})^2} = \Theta\left(\frac{(np)^{1-h}}{(\log n)^2 \log np}\right)\nonumber.
\end{align}

With these definitions at hand we can now show that the conditions of Lemma~\ref{l:boxgame} are satisfied under various assumptions.

\begin{lem}\label{l:applyboxgame}
Let $\alpha,p,k,N,\beta(\cdot)$ and $q(\cdot)$ be as defined %
above and assume that Maker plays according to our proposed strategy. Let $t'\le n$ be a point in time, let $U$ be a set of uncolored vertices at time $t'$ and denote by $\calC'$ the coloring after the first $t'-1$ vertices have been colored. Furthermore, assume that there exists a constant $h<1$ such that the following conditions are satisfied:
\begin{list}{}{\leftmargin0.8cm\labelsep 0.1cm\labelwidth 0.7cm\topsep5pt\itemsep0pt}
\item[$(i)$\hfill]at time $t'$ Maker colors some vertex by using a move of his first type,
\item[$(ii)$\hfill]$a(v,\calC')\ge \beta(h)/2$ for all $v\in U$, 
\item[$(iii)$\hfill]for all $v\in U$ there exists $S(v)\subseteq A(v,\calC')$ s.t.\ $|S(v)|\le q(h)$ and s.t.\
$$|\{ v\in U\mid i\in A(v,\calC')\setminus S(v)\}| \le q(h)\qquad\text{for all colors $i=1,\ldots,k$}.$$
\end{list}
Then Maker does not lose the coloring game in the interval $[t',t'+|U|]$, if within this time interval both players color only vertices of $U$.
\end{lem}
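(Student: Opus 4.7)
The plan is to reduce the portion of the coloring game played on $U$ to an instance of the box game of Lemma~\ref{l:boxgame} and then verify its set-size hypothesis. Concretely, I would take the boxes to be $A_v := A(v,\calC')\setminus S(v)$ for $v\in U$, and play the box game with Breaker-power $q := q(h)$, stealing parameter $z := 2N$, and greediness $d := q(h)$. The translation is that each coloring move -- assigning some color $i$ to some vertex $v\in U$ -- splits into two atomic box-game events: a Maker-claim of an element of $A_v$ (which removes the whole set $A_v$) and a Breaker-removal that strikes $i$ from every other $A_u$ still containing it. By condition~(iii), the latter affects at most $q(h)-1$ sets, so Breaker's budget of $q=q(h)$ removals per move suffices; the non-disjointness of the $A_v$'s (the same color appearing in several boxes) is absorbed into this over-counting. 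Only the first-type moves -- one in every $N$ of Maker's moves, hence one in every $2N$ coloring moves -- count as real box-Maker moves; all other moves, i.e.\ Maker's second-type and Breaker's, are charged to stolen Maker moves, which only strengthens Breaker and is thus permissible.

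Next I would confirm that Maker plays $d$-greedily in this box game. At the moment of any first-type move, the current size of a box is $|A_v\cap A(v,\calC)|=|A(v,\calC)\setminus S(v)|$, which differs from $a(v,\calC)$ by at most $|S(v)|\le q(h)$. Consequently, the vertex $v^*$ chosen by Maker (the minimizer of $a(v,\calC)$) has current box size within $q(h)=d$ of the minimum, exactly the $d$-greedy rule.

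The heart of the argument is then the set-size hypothesis. From (ii) together with $|S(v)|\le q(h)=\beta(h)/(\log n)^2$ we obtain $|A_v|\ge \beta(h)/2-q(h)\ge \beta(h)/3$ for $n$ sufficiently large. For any non-empty $I\subseteq U$ of size $\ell$, the harmonic factor $1+\sum_{i=1}^{\ell-1}1/i$ is $O(\log n)$, whence
$$f(\ell,q,d)\;\le\;(zq+d)\cdot\ell\cdot O(\log n)\;\le\;(2N+1)\,q(h)\,\ell\cdot O(\log n)\;=\;O\!\Bigl(\tfrac{N\,\ell\,\beta(h)}{\log n}\Bigr),$$
which is dwarfed by $\sum_{v\in I}|A_v|\ge \ell\,\beta(h)/3$ for $n$ large. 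Lemma~\ref{l:boxgame} then produces a winning strategy for Maker in the box game, so no box -- and hence no coloring set $A(v,\calC)$ for an uncolored $v\in U$ -- ever becomes empty during $[t',t'+|U|]$, as claimed.

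The hardest part, I expect, is not the arithmetic but the careful bookkeeping of the reduction: making sure that every element-removal in the abstract box game is accounted for by a coloring move, that every move charged to Breaker (especially Maker's second-type moves) is genuinely no weaker than one of the box-game moves, and that the $d$-greedy property persists at each first-type move within $[t',t'+|U|]$, not just at the initial time $t'$.
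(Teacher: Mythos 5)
Your proposal is correct and follows essentially the same route as the paper's proof: the same reduction to the box game $B(A(v,\calC')\setminus S(v);\,q(h),2N)$, the same observation that $|S(v)|\le q(h)$ makes Maker's first-type moves $q(h)$-greedy on the restricted boxes, and the same size estimate $\beta(h)/2-q(h)\gg (2Nq(h)+q(h))\bigl(1+\sum_{i=1}^{|U|-1}\tfrac1i\bigr)=\Theta(q(h)\log|U|)$ before invoking Lemma~\ref{l:boxgame}. The bookkeeping points you flag at the end (boxes are removed wholesale even when a vertex is colored with a color in $S(v)$, and second-type moves are charged as stolen Maker moves) are exactly the ones the paper also addresses.
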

\begin{proof}We have already seen the connection between the coloring game and the box game. We will show that Maker wins the box game induced by the vertex set $U$ and the color sets $A(v,\calC ') \setminus S(v)$. 

Every color appears at most $q(h)$ times in the sets $A(v,\calC ') \setminus S(v)$, which implies a Breaker-power of at most $q(h)$ in our box game translation. Recall that in his moves of first type, Maker chooses a vertex~$v$ where $a(v,\mathcal{C})$ is minimal. Since $|S(v)|\le q(h)$ for all $v \in U $, we know that on the sets $A(v,\calC ') \setminus S(v)$, Maker plays $q(h)$-greedily with his moves of first type. Note that if one player colors a vertex $v$ using a color $i$, we remove anyway the whole box $A(v,\calC')$ from the box game and don't care if $i \in S(v)$. This allows us to look at the box game of the restricted sets $A(v,\calC ') \setminus S(v)$.

Maker uses his move of first type in all his $N$-th turns. In the box game this corresponds to the setting where Breaker steals all but every $2N$-th of Maker's moves. We conclude that between $t'$ and $t$, Breaker and Maker have played the box game 
\[B(A(v_1,\mathcal{C}')\setminus S(v_1), A(v_2,\mathcal{C}')\setminus S(v_2), \ldots, A(v_{|U|},\mathcal{C}')\setminus S(v_{|U|}); q(h),2N).\] 
We observe that 
\begin{align*}
(2N q(h)+q(h))\left(1+\sum_{i=1}^{|U|-1}\frac{1}{i}\right) & = \Theta(q(h) \log|U|) \;=\;
o(\beta(h))
\end{align*}
by definition of $q(h)$ and $\log|U|\le \log n$. For $n$ large enough we thus have for all $v \in U$ that 
\[|A(v,\mathcal{C}') \setminus S(v)| \geq \frac{\beta(h)}{2}-q(h) \gg (2N q(h)+q(h))\left(1+\sum_{i=1}^{|U|-1}\frac{1}{i}\right).\]

By Lemma~\ref{l:boxgame} Maker wins this box game, thus no vertex of the set $U$ could run out of available colors until time $t'+|U|$. Hence Maker does not lose the coloring game in this period.
\end{proof}

Lemma~\ref{l:applyboxgame} shows that it is not essential that we can bound how often colors appear at uncolored vertices. It suffices if  we can put those colors that appear too often (and therefore enlarge Breaker's power) in some sets $S(v)$. The critical and most technical part of our proof will be to show that we can find such sets $S(v)$ in order to apply Lemma~\ref{l:applyboxgame}. In the remainder of this section we give an outline of the key steps.

First we study how the sizes of our color classes behave during the coloring process. For doing so we introduce some notation. We call a color~$i$ {\em active} if there exists at least one uncolored vertex that has $i$ in its color set $A(v,\calC)$. The {\em level} of the game, given a partial coloring $\calC$, is then defined as the minimum size of an active color class:
\begin{equation}\label{eq:def:ell}
\ell(\calC) := \min\{ |\calC_i| \mid \text{ $i$ active}\}. 
\end{equation}

Similarly, we define $\ell(t) = \ell(\calC)$, where $\calC$ is the partial coloring obtained at time $t$. Due to Maker's moves of second type, $\ell(t)$ is increasing during the game. In Section~\ref{sec:midlevel} we show that if Maker uses our proposed strategy, then his moves of the second type imply that as long as we have enough uncolored vertices, there are always many active colors classes whose size is close to the current level of the game. 

\begin{lem}\label{l:midlevel}
Let $\alpha,p$, and $k$ be as in Theorem~\ref{thm:main} and define $\xi = \frac{1}{10}(1-\frac{1}{\alpha})$. Then with high probability, there exists a constant $N=N(\alpha)$ such that the following statement is true for all $t \leq n- (np)^{1-4\xi}$:
If Maker plays according to our proposed strategy with parameter $N$, then the total number of active colors $i$ with size $|C_i| \leq \ell(t)+\xi\log_b np$ is at least $\frac{\xi}{8}k $.\end{lem}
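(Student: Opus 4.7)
My plan is to prove Lemma~\ref{l:midlevel} in two phases: a probabilistic phase establishing that inactive colors cannot be small, followed by a deterministic counting phase that exploits Maker's second-type strategy.

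In the first phase I would show that with high probability over $G \sim G_{n,p}$, at every time $t$ with $|U_t| \ge (np)^{1-4\xi}$ uncolored vertices and for every color class $\calC_i(t)$ with $|\calC_i(t)| \le c^* := (1-5\xi)\log_b np$, the color $i$ is active. For a fixed disjoint pair $(S,U)$ with $|S| = c \le c^*$ and $|U| = u \ge (np)^{1-4\xi}$, the probability that every vertex of $U$ has a neighbor in $S$ is at most $(1-b^{-c})^u \le \exp(-u b^{-c}) \le \exp(-(np)^\xi)$. By \eqref{eq:boundonp} this is sub-polynomial in $n$, so the principle of deferred decisions together with a union bound over the at most $n^{c^*}$ candidate sets $S$ and the at most $n$ times $t$ finishes the claim.

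In the second phase, I would assume the probabilistic bound and, for a fixed $t$ in the relevant range, partition the $k$ colors into $I_1$ (active with $|\calC_i(t)| \le L := \ell(t) + \xi\log_b np$), $I_2$ (active with $|\calC_i(t)| > L$), and $I_3$ (inactive, hence of size $\ge c^*$). Suppose for contradiction that $|I_1| < \xi k / 8$. Three structural ingredients are combined: \textit{(i)}~the counting identity $\sum_i |\calC_i(t)| = t-1 \le n$; \textit{(ii)}~a level bound, namely that every Maker second-type move on color $i$ raises $|\calC_i|$ from the then-current level to level~$+1$, so by monotonicity of $\ell(\cdot)$ one has $m_i \le \ell(t) + 1$ for every color (whether active or inactive), where $m_i$ counts Maker's second-type moves on $i$; and \textit{(iii)}~a residual bound: every color in $I_2 \cup I_3$ must have absorbed at least $\xi \log_b np - 1$ non-second-type increments (at least $c^* - \ell(t) - 1$ for colors in $I_3$), while the total number of Breaker and Maker-first-type moves is at most $t(N+1)/(2N)$.

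Combining these with $|I_1| + |I_2| + |I_3| = k$, I would case-split on $\ell(t)$. When $\ell(t) \ge (1-6\xi)\log_b np$ (so $L \ge c^*$), ingredient~(i) alone suffices: each color in $I_2 \cup I_3$ has size $\ge c^*$, giving $n \ge (k - \xi k/8)(1-5\xi)\log_b np$, and plugging in $k = \alpha n/\log_b np$ together with the choice $\xi = \frac{1}{10}(1 - 1/\alpha)$ yields $\alpha(1-\xi/8)(1-5\xi) > 1$, a contradiction. When $\ell(t) < (1-6\xi)\log_b np$, ingredient~(i) is not tight enough and one must additionally invoke (ii) and (iii), amortizing Breaker's inflation budget against Maker's level-raising budget and choosing $N = N(\alpha)$ large enough so that Maker's second-type moves dominate.

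The main obstacle lies in this low-level regime. Intuitively, every color leaving $I_1$ via inflation past $L$ costs Breaker at least $\xi \log_b np$ moves, while Maker's second-type moves continuously replenish the window near the current level, so that these two budgets balance; the calibration $\xi = \frac{1}{10}(1-1/\alpha)$ is chosen precisely so that the argument closes for every $\alpha > 1$, and the closing computation requires $N$ to be a sufficiently large constant depending on $\alpha$.
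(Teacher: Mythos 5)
Your two-phase decomposition has the same general shape as the paper's (a probabilistic lemma controlling which colors can be eliminated, plus a deterministic counting argument exploiting Maker's second-type moves), but both phases contain genuine gaps. In Phase~1 your claim is too strong and the union bound does not close: the set $U_t$ of uncolored vertices is determined adaptively by the game play, which itself depends on the edges between $S$ and $U_t$, so deferred decisions do not apply; and a union bound over candidate sets $U$ of size $(np)^{1-4\xi}$ costs a factor $n^{(np)^{1-4\xi}}=e^{(np)^{1-4\xi}\log n}$, which swamps the single-pair probability $e^{-(np)^{\xi}}$. Indeed, pairs $(S,U)$ with $|S|\le(1-5\xi)\log_b np$, $|U|=(np)^{1-4\xi}$ and $S$ dominating $U$ do exist in $G_{n,p}$, so an individual small color class can in principle be eliminated. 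The paper's Lemma~\ref{l:eliminated} deliberately only excludes $d=\lceil(np)^{1-4\xi}\rceil$ \emph{simultaneous} eliminations --- there the probability $e^{-(np)^{2-9\xi-1/\alpha}}$ does beat the union bound over $A$ and the $\calC_i$ --- and the counting then tolerates these $o(k)$ small eliminated colors.

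In Phase~2 the low-level regime --- which you correctly identify as the crux, and which by Corollary~\ref{l:levelbound} is in fact the only regime that occurs, since w.h.p.\ $\ell(t)<(\tfrac1\alpha+\xi)\log_b np=(1-9\xi)\log_b np<(1-6\xi)\log_b np$ --- your budget comparison provably fails for $\alpha$ close to $1$. Each color in $I_2\cup I_3$ costs Breaker only about $\xi\log_b np$ non-second-type increments, and his total budget is about $n/2$, so the static count only yields $|I_2\cup I_3|\le(1+o(1))\frac{n}{2\xi\log_b np}=(1+o(1))\frac{k}{2\alpha\xi}$, which exceeds $k$ whenever $2\alpha\xi=\frac{\alpha-1}{5}<1$; no choice of $N$ repairs this, since large $N$ only drives the move ratio toward $1{:}1$. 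What is missing is the dynamic amortization of the paper's Lemma~\ref{l:ballgame}: because Maker always plays a minimum-loaded active color, the number of moves Breaker can make while the level sits at $\ell$ is bounded by $\frac{N+1}{N-1}$ times the number of minimum-loaded colors, and an induction on $\ell$ shows that the cumulative count $C(\ell)$ of increments landing above the \emph{moving} window decays --- Breaker's early investments are wasted as the window slides up. That inductive potential argument, combined with the level bound from Corollary~\ref{l:levelbound}, is what produces the constant $\frac{\xi}{8}k$; your sketch names the right intuition (``replenishment'') but supplies no mechanism for it, and the mechanism you do supply does not close.
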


Recall that Maker's strategy is deterministic. The event in the above lemma thus depends solely on properties of the random graph. Moreover note that since $\alpha > 1$, $\xi$ is a small but positive constant.

In Section~\ref{sec:color} we prove the following lemma.

\begin{lem}\label{l:color}
Let $\alpha,p,k,\xi$ and $\gamma(\cdot)$ be defined as above. Let $t' \le n$ be a point in time, $U$ be a set of uncolored vertices and $4\xi < h \le \frac{1}{\alpha}+3\xi$ be a constant such that $|C_i| \ge (h-4\xi)\log_b np$ holds for every active color $i$ at time $t'$ and such that $|U| \le 2N\gamma(h)+1$. If conditions $(i)$ and $(ii)$ of Lemma~\ref{l:applyboxgame} are satisfied and Maker uses the strategy defined above with parameter $N$ given by Lemma~\ref{l:midlevel}, then with high probability condition $(iii)$ is satisfied as well.
\end{lem}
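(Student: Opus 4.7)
My plan is to choose, for each $v \in U$, the set $S(v)$ to consist of all colors $i \in A(v,\calC')$ that are overfrequent, where overfrequent means $M_i := |\{w \in U : i \in A(w,\calC')\}| > q(h)$. This choice makes the second clause of condition $(iii)$ automatic: for a non-overfrequent color $i$ we have $|\{w \in U : i \in A(w,\calC')\setminus S(w)\}| \le M_i \le q(h)$, while for an overfrequent color this set is empty, since $i$ has been deleted from every $A(w,\calC')$ in which it appeared. It therefore suffices to establish the size bound $|S(v)| \le q(h)$ for every $v \in U$ with high probability.

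For a fixed pair $(U,C_i)$ with $U \cap C_i = \emptyset$ (the relevant case, as $U$ is uncolored while $C_i$ consists of colored vertices), the edges between $U$ and $C_i$ are i.i.d.\ $\mathrm{Bernoulli}(p)$ and are independent of all other edges. Hence $M_i$ is a sum of $|U|$ independent Bernoullis with parameter $(1-p)^{|C_i|} \le (np)^{-(h-4\xi)}$. Plugging in $|U| \le 2N\gamma(h)+1$, using the definitions of $\beta,\gamma,q$, the hypothesis $h \le 1/\alpha + 3\xi$ (which, together with $\xi = \tfrac{1}{10}(1-\tfrac{1}{\alpha})$, gives $1-h-4\xi \ge 3\xi$), and the inequality~(\ref{eq:boundonp}), a direct computation shows that $\mathbb{E}[M_i]$ is smaller than $q(h)$ by at least a polylogarithmic factor. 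A Chernoff bound then gives $\Pr[M_i > q(h)] \le \exp(-\Omega(q(h)))$.

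Next I fix $v \in U$ and consider $|S(v)|$ as a sum over colors $i$ of the indicator of the event that $i \in A(v,\calC')$ and $M_i > q(h)$. Each such indicator depends only on the edges between $U$ and $C_i$, and since the color classes $C_i$ are pairwise disjoint the indicators are mutually independent across $i$. Each has probability at most $(np)^{-(h-4\xi)}\exp(-\Omega(q(h)))$, so $\mathbb{E}[|S(v)|]$ is super-polynomially smaller than $q(h)$, and a second Chernoff-type estimate yields $\Pr[|S(v)| > q(h)] \le \exp(-\Omega(q(h)^2))$. Because $q(h) \ge (np)^{7\xi}/\mathrm{polylog}(n)$ and $p \ge e^{-o(\log n)}$, we have $q(h) = n^{\Omega(1)}$, so this per-configuration failure probability is smaller than any polynomial in $n$.

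The main obstacle is the union bound over game states, since both $U$ and the color classes $C_i$ are themselves functions of the random graph through the game dynamics. Naively enumerating over all $(U,\calC')$ gives an exponential-in-$n$ count, which my Chernoff bound cannot absorb for $\alpha$ close to $1$. I would circumvent this by appealing to the outline of Section~2: in the eventual application condition $(iii)$ only needs to hold for configurations anchored to a hypothetical Breaker-winning scenario specified by the critical time $t$ and the witness vertex $v_0$, of which there are at most $n^2$; once these (together with Breaker's moves) are fixed, the deterministic Maker strategy pins down $(U,\calC')$. The polynomial number of such anchors is easily absorbed by the super-polynomial tail $\exp(-\Omega(q(h)^2))$. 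The delicate technical point is to preserve the freshness of the edges between $U$ and $\bigcup_i C_i$ when conditioning on the history of the game; I would handle this via an exposure-style argument that postpones revealing those edges until the game path has been fixed.
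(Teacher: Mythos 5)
Your fixed-configuration computations are fine (the expectation bounds $\mathbb{E}[M_i]\ll q(h)$ and the Chernoff tails do go through for a fixed pair $(U,\calC')$ chosen independently of the graph), but the argument breaks exactly at the point you flag as "the main obstacle," and the proposed fix does not work. Fixing the critical time $t$ and the witness $v_0$ does \emph{not} pin down $(U,\calC')$ up to polynomially many possibilities: Breaker is an adaptive adversary, so the number of possible Breaker move sequences (and hence of reachable game states at time $t$) is super-polynomial, and Breaker's choices may themselves depend on the very edges you want to keep unexposed. The deferred-exposure idea also fails on Maker's side: Maker's strategy explicitly reads the quantities $a(v,\calC)$, i.e.\ the edges between uncolored vertices and the color classes, so the game path cannot be "fixed" before those edges are revealed. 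Finally, a non-adaptive union bound over all configurations is also out of reach for your statement: you would have to union over all $U$ with $|U|\le 2N\gamma(h)+1$, costing roughly $n^{\gamma(h)}$ with $\gamma(h)=\Theta((np)^h\log n\log_b np)$, and for $h$ close to $1$ (i.e.\ $\alpha$ close to $1$) this is not absorbed by $\exp(-\Omega(q(h)^2))$ since $q(h)^2=\Theta((np)^{2-2h}/\mathrm{polylog})$ can be far smaller than $\gamma(h)$.

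The paper circumvents all of this by never making a probabilistic statement about the actual game state. It proves (Lemma~\ref{l:shuffle}) that with high probability the random graph has a uniform structural property holding simultaneously for \emph{all} families of disjoint sets $S,\calC_1,\ldots,\calC_d$ with $|\calC_i|\ge(h-4\xi)\log_b np$ and, crucially, $|S|\le(np)^{h+\xi}$ --- a bound much smaller than $|U|$, which is what makes the union bound $\binom{n}{s}\binom{n}{C}^d$ affordable against tails of the form $2^{-\Omega(q(h)s)}$. It then constructs the sets $S(v)$ not as "all overfrequent colors" but via the greedy algorithm {\sc ColorArranging}, which retains each overfrequent color at the $q(h)$ vertices whose current $S(v)$ is largest. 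This design supports a deterministic amplification argument: if a single vertex ended with $|S(w)|>q(h)$, then the nested sets $W_K=\{v: s(v)>(1-Kc)q(h)\}$ must grow geometrically (each application of the graph property multiplies their size by roughly $cq(h)^2(np)^{-12\xi}$), until $|W_L|$ exceeds $(np)^{h+\xi}\gg|U|$, a contradiction. That bridge --- from one bad vertex to a large, uniformly controllable witness set --- is the idea missing from your proposal, and without it (or some substitute that decouples the probabilistic estimate from the adaptive game history) the proof does not close.
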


Basically the statement of this lemma is that if we can control the quantities of the game process with a single constant $h$, then we can apply Lemma~\ref{l:applyboxgame} in order to show that Maker wins the coloring game. It is important to note that the high probability statements holds only for a fixed constant~$h$. In order to complete the proof of Theorem~\ref{thm:main} we do, however, need to apply these lemmas for different values of~$h$. We achieve this by dividing the game process into a constant number of periods, which are defined via the level $\ell(t)$ of the game. Using Lemma~\ref{l:midlevel} we will then show that for each period it suffices to consider a single constant $h$. Thus we need to apply Lemma~\ref{l:color} only a constant number of times, which is fine. Section~\ref{sec:proofofthm} contains the details of these arguments and how we can use them to complete the proof of Theorem~\ref{thm:main}.

\section{Proofs}

\subsection{Properties of random graphs}\label{sec:generalprop}

We start this proof section by collecting some properties of partial colorings~$\calC$ of a random graph $G_{n,p}$.  Thereby we will repeatedly use the following Chernoff-type bounds for the tails of the binomial distribution:

\begin{thm}[cf.\ e.g.\ \cite{alonspencer}]\label{thm:chernoff}Let $X = \sum_{i=1}^n$ be the sum of independent indicator random variables such that $\Pr[X_i = 1] = p_i$. Then the following inequalities hold for $\mu := \mathds{E}[X] = \sum_{i=1}^n p_i$:
\begin{list}{}{\leftmargin0.8cm\labelsep 0.1cm\labelwidth 0.7cm\topsep5pt\itemsep0pt}
\item[$(i)$\hfill]$\Pr[X \leq (1-\delta)\mu] \leq e^{-\mu \delta^2/2}$\quad for all $0 < \delta \leq 1$, and
\item[$(ii)$\hfill]$\Pr[X \ge t] \leq 2^{-t}$\quad for all $t \geq 2e\mu$.
\end{list}
\end{thm}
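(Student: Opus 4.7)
Both inequalities follow from the classical exponential moment (Chernoff/Bernstein) argument. The overall plan is, in each part, to apply Markov's inequality to a suitable exponential transform of $X$, use independence of the $X_i$ to factor the resulting moment generating function, bound each factor via $1+x\le e^{x}$, and finally optimize over the free parameter $s>0$.

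For $(i)$, I would fix $s>0$ and estimate
\[
\Pr[X\le (1-\delta)\mu] \;=\; \Pr\!\left[e^{-sX}\ge e^{-s(1-\delta)\mu}\right] \;\le\; e^{s(1-\delta)\mu}\prod_{i=1}^{n}\bigl(1-p_i+p_i e^{-s}\bigr) \;\le\; \exp\!\left(s(1-\delta)\mu - \mu(1-e^{-s})\right),
\]
using $1 - p_i(1-e^{-s})\le \exp(-p_i(1-e^{-s}))$. Choosing the minimizer $s=-\log(1-\delta)$ reduces the exponent to $-\mu\bigl[(1-\delta)\log(1-\delta)+\delta\bigr]$. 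It then remains to verify the elementary inequality $(1-\delta)\log(1-\delta)+\delta \ge \delta^{2}/2$ on $(0,1]$; this follows because both sides vanish together with their first derivatives at $\delta=0$, while the second derivative of the difference equals $\delta/(1-\delta)\ge 0$, so the difference is nondecreasing and hence nonnegative. This gives the target $e^{-\mu\delta^2/2}$.

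For $(ii)$, the analogous template with positive $s$ yields
\[
\Pr[X\ge t] \;\le\; e^{-st}\prod_{i=1}^{n}\bigl(1+p_i(e^{s}-1)\bigr) \;\le\; \exp\!\left(-st+\mu(e^{s}-1)\right).
\]
Setting $s=\log(t/\mu)$ (legitimate as we may assume $t>\mu$) turns the exponent into $-t\log(t/\mu)+t-\mu$. The hypothesis $t\ge 2e\mu$ implies $\log(t/\mu)\ge 1+\log 2$, whence $-t\log(t/\mu)+t\le -t\log 2$; dropping the nonpositive term $-\mu$ then leaves an upper bound of $e^{-t\log 2}=2^{-t}$.

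The only step that requires genuine care is the analytic inequality at the end of $(i)$, which is precisely the reason the bound is restricted to the regime $0<\delta\le 1$; the remaining manipulations are routine bookkeeping. Since this is a textbook statement (cf.\ \cite{alonspencer}), no additional ideas beyond the Chernoff template are needed.
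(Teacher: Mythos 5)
Your argument is correct and is exactly the standard exponential-moment (Chernoff) derivation; the paper itself offers no proof of this statement and simply cites it from the literature, so there is nothing to compare against beyond noting that your computation matches the textbook treatment. Both optimizations of $s$, the elementary inequality $(1-\delta)\log(1-\delta)+\delta\ge\delta^2/2$ via the second-derivative argument, and the use of $t\ge 2e\mu$ to extract the factor $2^{-t}$ all check out.
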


Recall that a color~$i$ is called {\em active}, if it occurs in at least one set $A(v,\calC)$ of an uncolored vertex~$v$. We say that a color is {\em eliminated} if it is not active any more. Clearly, a color~$i$ may be eliminated if $|\calC_i|$ is very large. For instance Breaker could use the same color over and over again until it is nowhere possible. It turns out that with high probability, as long as we have enough uncolored vertices, the total number of colors which are eliminated before they are heavily used is relatively small. The following lemma formalizes this. (Think of~$A$ as a set of uncolored vertices where none of the colors $1,\ldots,d$ can be used.)

\begin{lem}\label{l:eliminated}
Let $e^{-o(\log n)} \le p \le 1-\eta$,  $b=\frac{1}{1-p}$, $\alpha>1$ be a constant and $\xi = \frac{1}{10}(1-\frac{1}{\alpha})$. Put $d:=\lceil (np)^{1-4\xi}\rceil$. Denote by $\mathcal{A}$ the event that there are pairwise disjoint sets $A$, $\calC_1$, $\calC_2$, \ldots, $\calC_d$ in $G_{n,p}$ such that 
\begin{list}{$\bullet$\hfill}{\labelwidth0.4cm\leftmargin0.5cm\labelsep0.1cm\topsep0pt\itemsep0pt}
\item $|A| \geq d$,
\item $|C_i| \leq (\frac{1}{\alpha}+\xi)\log_b{np}$ for all $i \in \{1, \ldots, d\}$, and
\item for all $i \in \{1, \ldots, d\}$ and $v \in A$ we have $\calC_i \cap N(v)\not=\emptyset$.
\end{list}
Then
$$\Pr[\mathcal{A}]=o(1).$$
\end{lem}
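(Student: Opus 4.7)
My plan is a straightforward first-moment / union-bound estimate. Fix $a := |A| \geq d$ and sizes $c_i := |\calC_i| \leq c := \lfloor(\tfrac{1}{\alpha}+\xi)\log_b np\rfloor$. For any specific pairwise disjoint configuration of such sets, the random edges between $A$ and each $\calC_i$ are mutually independent (different $i$ use disjoint vertex pairs), so
\[
\Pr\bigl[\,\calC_i\cap N(v)\neq\emptyset\text{ for all }v\in A\text{ and all }i\,\bigr]
\;=\;\prod_{i=1}^{d}\bigl(1-(1-p)^{c_i}\bigr)^a
\;\leq\;\bigl(1-(1-p)^c\bigr)^{ad},
\]
by monotonicity of $1-(1-p)^{c_i}$ in $c_i$.

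Next I would use $(1-p)^c=b^{-c}\geq(np)^{-(1/\alpha+\xi)}$ together with $1-x\leq e^{-x}$ to bound this by $\exp\bigl(-ad(np)^{-(1/\alpha+\xi)}\bigr)$. Substituting $d\geq(np)^{1-4\xi}$ and the prescribed value $\xi=\tfrac{1}{10}(1-\tfrac{1}{\alpha})$ yields $d(np)^{-(1/\alpha+\xi)}\geq(np)^{\delta}$ with
\[
\delta \;:=\; 1-\tfrac{1}{\alpha}-5\xi \;=\; \tfrac{1}{2}\bigl(1-\tfrac{1}{\alpha}\bigr)\;>\;0,
\]
so the probability that a given configuration satisfies all three bullet conditions is at most $\exp\bigl(-a(np)^\delta\bigr)$.

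The remaining step is a union bound over the choices of the vertex sets. There are at most $\binom{n}{a}\bigl(c\binom{n}{c}\bigr)^d \leq n^a\cdot c^d n^{cd}$ valid tuples (the extra factor $c^d$ accounts for summing over $c_i\leq c$). Hence
\[
\Pr[\mathcal{A}] \;\leq\; \sum_{a=d}^{n} n^{a+cd}\,c^d\,\exp\!\bigl(-a(np)^{\delta}\bigr).
\]
Bound~(\ref{eq:boundonp}) gives $(np)^{\delta}\gg\log_b np\cdot\log^2 n$, which easily dominates both $c\log n$ and $\log c$; so for $a\geq d$ the exponent $a(np)^\delta$ exceeds $2\bigl((a+cd)\log n+d\log c\bigr)$, and each summand is at most $\exp\bigl(-a(np)^\delta/2\bigr)$. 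Since $d(np)^\delta=(np)^{1+\xi}\to\infty$, the geometric sum is $O\bigl(\exp(-(np)^{1+\xi}/2)\bigr)=o(1)$.

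The only real obstacle is arithmetic: confirming that $\delta>0$ and that the union-bound entropy is swallowed by the Chernoff-type exponent. The positivity of $\delta$ is exactly what forces the specific choice of $\xi$ — with any larger color-class budget the exponent would vanish, while with any smaller one the $\xi$-dependence appearing in Lemma~\ref{l:midlevel} could not be sustained. The polylogarithmic overhead from the union bound is then absorbed by the polynomial growth of $(np)^\delta$ guaranteed by~(\ref{eq:boundonp}).
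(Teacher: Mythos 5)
Your argument is correct and is essentially the paper's own proof: a first-moment union bound over the choices of $A$ and the $\calC_i$, the estimate $\Pr[\bigwedge_i\{N(v)\cap\calC_i\neq\emptyset\}]\le e^{-d(np)^{-1/\alpha-\xi}}$ per vertex via independence of disjoint vertex pairs, and the observation that the resulting exponent $(np)^{1+\xi}$ (your $d(np)^\delta$ with $a=d$) swallows the entropy term thanks to~(\ref{eq:boundonp}). The only cosmetic difference is that the paper fixes $|A|=d$ and $|\calC_i|=(\frac1\alpha+\xi)\log_b np$ without loss of generality, whereas you sum over all admissible sizes.
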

\begin{proof}
Choose a set $A$ with $|A| = d$ and sets $\calC_i$ with $|\calC_i|=(\frac{1}{\alpha}+\xi)\log_b{np}$. (Note that we ignore floors and ceilings for ease of notation.) For a fixed vertex $v \in A$ we have
\[
\Pr\left[\bigwedge_{i=1}^d \left\{ N(v) \cap \calC_i \neq \emptyset \right\} \right]  \;=\; (1-(1-p)^{|\calC_i|})^d 
 \;=\; (1-(np)^{-1/\alpha-\xi})^d  \; \leq \; e^{-(np)^{-1/\alpha - \xi}d},
\]
and a union bound implies 
\begin{align*}
\Pr[\mathcal{A}] \;&\leq\; \binom{n}{|A|} \binom{n}{(\frac{1}{\alpha}+\xi)\log_b np}^d  e^{- |A|(np)^{-1/\alpha-\xi }d} \\
& \leq\; n^{|A|+(\frac{1}{\alpha}+\xi)d \log_b{np}}  e^{- |A|(np)^{-1/\alpha -\xi}d} \\
& =\; n^{(np)^{1-4\xi}+(\frac{1}{\alpha}+\xi)(np)^{1-4\xi} \log_b{np}} e^{-(np)^{2-9\xi-1/\alpha}}\\
& =\; o(1),
\end{align*}
as  $2-9\xi-1/\alpha=1+\xi$ and $p$ satisfies $(\ref{eq:boundonp})$.
\end{proof}

A direct consequence of Lemma~\ref{l:eliminated} is that we can bound the level $\ell(t)$ of the game (cf.\ definition~$(\ref{eq:def:ell})$) from above, assuming that there are enough uncolored vertices. 

\begin{cor}\label{l:levelbound}
Let $e^{-o(\log n)} \le p \le 1-\eta$,  $b=\frac{1}{1-p}$, $\alpha>1$ and $\xi = \frac{1}{10}(1-\frac{1}{\alpha})$. Let $t \leq n-(np)^{1-4\xi}$. Then with high probability, $\ell(t) < (\frac{1}{\alpha}+\xi)\log_b np$.
\end{cor}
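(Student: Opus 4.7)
The plan is to argue by contradiction using Lemma~\ref{l:eliminated}: if the level were too high, the partial colouring would contain many eliminated colour classes that are still relatively small, and these together with the currently uncoloured vertices would realise the forbidden configuration $\mathcal{A}$.

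Fix $t\le n-(np)^{1-4\xi}$ and write $\calC$ for the partial colouring after $t-1$ moves. Assume for contradiction that $\ell(t)\ge(\tfrac{1}{\alpha}+\xi)\log_b np$. By the definition of the level, every active colour has class size at least $(\tfrac{1}{\alpha}+\xi)\log_b np$, hence every colour whose class is strictly smaller must already be eliminated. A simple double count controls how many such small classes there are: since $\sum_i|\calC_i|\le t-1<n$, at most $n/((\tfrac{1}{\alpha}+\xi)\log_b np)$ of the $k=\alpha n/\log_b np$ colours can satisfy $|\calC_i|\ge(\tfrac{1}{\alpha}+\xi)\log_b np$, so at least
\[
k-\frac{n}{(\tfrac{1}{\alpha}+\xi)\log_b np}\;=\;\frac{\alpha\xi}{1/\alpha+\xi}\cdot\frac{n}{\log_b np}
\]
colours are eliminated with class size at most $(\tfrac{1}{\alpha}+\xi)\log_b np$. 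Since $\alpha>1$ and $\xi>0$ are constants, this is $\Theta(n/\log_b np)$, and a short calculation using $(\ref{eq:boundonp})$ shows $n/\log_b np\gg(np)^{1-4\xi}=d$.

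Pick any $d$ of these small eliminated classes and call them $\calC_1,\dots,\calC_d$, and let $A$ be the set of currently uncoloured vertices; by the bound on $t$ we have $|A|\ge(np)^{1-4\xi}\ge d$. Because each chosen $\calC_i$ is eliminated, $\calC_i\cap N(v)\neq\emptyset$ for every uncoloured $v$, and in particular for every $v\in A$. Thus $(A,\calC_1,\dots,\calC_d)$ witnesses the event $\mathcal{A}$ of Lemma~\ref{l:eliminated}, contradicting $\Pr[\mathcal{A}]=o(1)$. This proves that $\ell(t)<(\tfrac{1}{\alpha}+\xi)\log_b np$ holds with probability $1-o(1)$.

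A final observation: $\ell(t)$ is non-decreasing in $t$, since active colour classes can only grow and no colour can become active after having been eliminated. Consequently it suffices to run the argument at the single time $t=n-\lceil(np)^{1-4\xi}\rceil$; the bound for all earlier $t$ follows automatically and no union bound over $t$ is required. The only real technical point is the numerical inequality $n/\log_b np\gg(np)^{1-4\xi}$, but this is precisely what the density assumption $p\ge e^{-o(\log n)}$ delivers through $(\ref{eq:boundonp})$, so I do not expect any serious obstacle beyond keeping the constants in the counting step tidy.
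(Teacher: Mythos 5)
Your proof is correct and is essentially the paper's argument run in the contrapositive direction: the paper invokes Lemma~\ref{l:eliminated} to conclude that at most $d$ classes are eliminated while small and then derives the contradiction $(k-d)(\tfrac1\alpha+\xi)\log_b np>n$, whereas you first count that fewer than $n/((\tfrac1\alpha+\xi)\log_b np)$ classes can be large and then exhibit $d$ small eliminated classes witnessing the event $\mathcal{A}$ directly. Both hinge on exactly the same two ingredients (Lemma~\ref{l:eliminated} and $\alpha(\tfrac1\alpha+\xi)>1$), so I count this as the same approach; your closing remark on the monotonicity of $\ell(t)$ is a correct and welcome clarification of why no union bound over $t$ is needed.
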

\begin{proof}Suppose by contradiction that $\ell(t) \geq (\frac{1}{\alpha}+\xi)\log_b np$. By the assumption $t \leq n-(np)^{1-4\xi}$ we know that there are at least $(np)^{1-4\xi}$ uncolored vertices. From Lemma~\ref{l:eliminated} it follows that with high probability, at most $d=\lceil (np)^{1-4\xi}\rceil$ colors have been eliminated before they reached size at least $(\frac{1}{\alpha}+\xi)\log_b{np}$. All other color classes have size at least $(\frac{1}{\alpha}+\xi)\log_b np$ at time~$t$. But this would imply immediately that the total number of colored vertices is at least
\[\left(\frac{\alpha n}{\log_b np}-\lceil (np)^{1-4\xi}\rceil\right)\left(\frac{1}{\alpha}+\xi\right)\log_b np \;=\; (1-o(1))(1+\alpha\xi) n \;>\; n,\]
which is not possible.
\end{proof}

One of the crucial points in the proof of Theorem~\ref{thm:main} is to find sets $S(v)$ that satisfy Condition $(iii)$ of Lemma~\ref{l:applyboxgame}. Intuitively, this is easier if for a given partial coloring $\calC$, the color lists $A(v,\calC)$ look almost randomly, i.e., if they contain different colors for different vertices $v$. Our next lemma establishes some bounds on how similar the sets $A(v,\calC)$ can be. 


\begin{lem}\label{l:shuffle}Let $p$, $\xi$, $q(\cdot)$, $k$ be defined as above and let $4\xi < h \le \frac{1}{\alpha}+3\xi$ and $0< c < 1$ be constants. Let $\mathcal{B}$ denote the event that there exist disjoint sets $S$, $\calC_1, \ldots, \calC_d$ in $G_{n,p}$, where $d =d(n)\le k$ with  $|\calC_i| \geq (h-4\xi)\log_b{np}$ for all $i \in \{1, \ldots d\}$ and $|S| \leq (np)^{h+\xi}$, such that one of the following two conditions are satisfied:
\begin{list}{}{\topsep0pt\itemsep0pt\labelwidth0.4cm\labelsep0.2cm\leftmargin0.6cm}
\item[$(i)$\hfill]$d \leq   c q(h) |S| (np)^{-6\xi}$ and for all $v \in S$ there exist at least $ c q(h)$ sets $\calC_i$ such that $N(v) \cap \calC_i = \emptyset$, or
\item[$(ii)$\hfill]$d \ge |S|q(h)^{-1}(np)^{6\xi}$ and for all $i \in \{1, \ldots, d\}$ there exist at least $q(h)$ vertices $v \in S$ such that $N(v) \cap \calC_i = \emptyset$.
\end{list}
Then 
\[\Pr[\mathcal{B}] = o(1).\]
\end{lem}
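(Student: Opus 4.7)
The plan is a union bound over all possible witnesses $(S,\calC_1,\ldots,\calC_d)$, estimating the probability for each fixed witness by the Chernoff bound of Theorem~\ref{thm:chernoff}(ii). Two structural observations make the estimate effective. First, since $S,\calC_1,\ldots,\calC_d$ are pairwise disjoint, the indicator random variables $X_{v,i}:=\mathbf{1}[N(v)\cap\calC_i=\emptyset]$ (with $v\in S$, $i\in\{1,\ldots,d\}$) depend on pairwise disjoint sets of potential edges and are therefore mutually independent, each with success probability $(1-p)^{|\calC_i|}$. Second, the event $\mathcal{B}$ is monotone in the $\calC_i$'s: shrinking $\calC_i$ only makes $N(v)\cap\calC_i=\emptyset$ more likely, so I may restrict the union bound to witnesses in which every $\calC_i$ has size exactly $m:=\lceil(h-4\xi)\log_b np\rceil$. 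With this normalization $\Pr[X_{v,i}=1]\le(np)^{-(h-4\xi)}$, and the number of witnesses with $|S|=s$ is at most $n^{s+md}$.

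For case $(i)$, I fix a witness with $d\le cq(h)s(np)^{-6\xi}$ and, for each $v\in S$, bound
\[\mathds{E}\!\left[\sum_{i=1}^d X_{v,i}\right]\;\le\;d\,(np)^{-(h-4\xi)}\;\le\;cq(h)\,s\,(np)^{-h-2\xi}\;\le\;cq(h)\,(np)^{-\xi},\]
using also $s\le(np)^{h+\xi}$. Since this is $o(cq(h))$, Theorem~\ref{thm:chernoff}(ii) yields $\Pr[\sum_i X_{v,i}\ge cq(h)]\le 2^{-cq(h)}$, and independence across $v$ gives probability at most $2^{-cq(h)s}$ for the fixed witness. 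Summing $n^{s+md}\cdot 2^{-cq(h)s}$ over $s$ and $d$ will then yield $o(1)$ provided $q(h)\gg\log n$ and $(np)^{6\xi}\gg m\log n$. Both are consequences of $(\ref{eq:boundonp})$ combined with $1-h\ge 7\xi$, which follows in turn from $h\le 1/\alpha+3\xi$ and $\xi=\tfrac{1}{10}(1-1/\alpha)$.

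Case $(ii)$ is symmetric, with the roles of $S$ and the $\calC_i$ swapped. I fix $\calC_i$ and estimate $\mathds{E}[Z_i]$ for $Z_i:=\sum_{v\in S}X_{v,i}$: it is at most $s\,(np)^{-(h-4\xi)}\le(np)^{5\xi}$, which is $\ll q(h)$ because $1-h-5\xi\ge 2\xi$ by the same numerology. Chernoff therefore gives $\Pr[Z_i\ge q(h)]\le 2^{-q(h)}$, and independence across $i$ yields $2^{-q(h)d}$ per witness. The hypothesis $d\ge s\,q(h)^{-1}(np)^{6\xi}$ forces $q(h)d\ge s(np)^{6\xi}$, large enough to absorb both $n^s$ and $n^{md}$ in the union bound, again via $(\ref{eq:boundonp})$.

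The main obstacle I anticipate is the arithmetic bookkeeping: verifying in both cases that the entropy factors $n^s$ and $n^{md}$ from the union bound are comfortably dominated by the Chernoff exponents. This is precisely what the $(np)^{\pm 6\xi}$ slack built into the hypotheses is designed to provide, together with the polynomial lower bound $(np)^\xi\gg\log_b np\cdot\log^2 n$ from $(\ref{eq:boundonp})$.
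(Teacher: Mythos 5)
Your proposal is correct and follows essentially the same route as the paper's proof: the same normalization of the $\calC_i$'s to size $(h-4\xi)\log_b np$ by monotonicity, the same union bound with entropy factor $n^{s+md}$, the same use of mutual independence of the indicators $X_{v,i}$ (via disjointness of the witness sets) together with Theorem~\ref{thm:chernoff}(ii), and the same expectation estimates $cq(h)(np)^{-\xi}$ and $(np)^{5\xi}\ll q(h)$. The bookkeeping conditions you identify ($q(h)\gg\log n$, $(np)^{6\xi}\gg m\log n$, and $1-h\ge 7\xi$) are exactly the ones the paper verifies via $(\ref{eq:boundonp})$.
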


\begin{proof}
Note that without loss of generality we may assume that the $\calC_i$'s all have size {\em equal} to $C := (h-4\xi)\log_b{np}$. We can then
apply a union bound over the choices of $d$, $S$ and $\calC_1, \ldots, \calC_d$ to observe that
\begin{align*}
\Pr[{\cal B}] &\le \sum_{d,s}\binom{n}{s}\cdot \binom{n}{C}^d\cdot \Pr[{\cal B}(d,S,\calC_1,\ldots,\calC_d)],
\end{align*}
where $\Pr[{\cal B}(d,S,\calC_1,\ldots,\calC_d)]$ denotes the probability that ${\cal B}$ holds for some fixed sets $S$ and $\calC_1,\ldots,\calC_d$, and $s$ denotes the size of the set $S$.
Observe that for a vertex $v \in S$ and a given set $\calC_i$ we have
\begin{equation}\label{eq:shuffle}
\Pr[N(v) \cap \calC_i = \emptyset] = (1-p)^{|\calC_i|} = (np)^{-h+4\xi},
\end{equation}
as we assumed that $|\calC_i| =  C$.

We first consider property $(i)$. Clearly, in this case we may assume that $d \leq  c q(h) |S| (np)^{-6\xi}$. For every $v \in S$ we define a random variable $X(v)$ that counts the number of sets $\calC_i \in \{\calC_1, \ldots, \calC_d\}$ such that $v$ has no neighbors in $\calC_i$.  By $(\ref{eq:shuffle})$ and the upper bounds on $d$ and $|S|$ we deduce 
\[
\mathds{E}[X(v)]  \leq d \cdot (np)^{-h+4\xi} = |S|  \cdot c q(h) (np)^{-6\xi} (np)^{-h+4\xi} \leq  c q(h) (np)^{-\xi} \ll  c q(h).
\]
$X(v)$ is a sum of independent Bernoulli random variables. By a Chernoff bound (see Theorem~\ref{thm:chernoff}) it follows that $\Pr[X(v) \geq  c q(h)] \leq 2^{- c q(h)}$.
Note that the random variables $X(v)$ are independent for all $v\in S$. Hence, we have for $n$ sufficiently large
\[
\binom{n}{C}^d\ \cdot \Pr[\text{$(i)$ holds}]
 \;\leq\; n^{dC} \cdot 2^{- c q(h) |S|}
 \;=\; n^{(h-4\xi)\log_bnp \cdot  c q(h) |S| (np)^{-6\xi} } \cdot 2^{- c q(h) |S|}
 \;\stackrel{(\ref{eq:boundonp})}\le\; 2^{-\frac12 c q(h) |S|} .
\]

Now consider property $(ii)$. Here we may assume w.l.o.g.\ that 
$d = |S|q(h)^{-1}(np)^{6\xi}$. For every $i \in \{1,\ldots,d\}$ define a random variable $X(i)$ that counts the number of vertices in $S$ that have no neighbor in $\calC_i$. By $(\ref{eq:shuffle})$, the upper bounds on $|S|$ and $h$ and the definition of $\xi=\frac1{10}(1-\frac1{\alpha})$ this implies
\[\mathds{E}[X(i)] \leq |S| \cdot (np)^{-h+4\xi} \leq (np)^{5\xi} \leq  (np)^{1-h-2\xi} \ll q(h).\]
Similarly as above we use Chernoff bounds to obtain
$\Pr[X(i) \geq q(h)] \leq 2^{-q(h)}$
and the independence of the random variables $X(i)$ to deduce that for $n$ sufficiently large
\[
\binom{n}{C}^d\ \cdot \Pr[\text{$(ii)$ holds}]
  \;\leq\; n^{dC} \cdot 2^{-q(h) d}
 \;=\; n^{(h-4\xi)\log_b{np} \cdot |S|q(h)^{-1}(np)^{6\xi}} \cdot 2^{- |S|(np)^{6\xi}}
 \;\stackrel{(\ref{eq:boundonp})}\le\; 2^{- \frac12|S|(np)^{6\xi}}.
\]
Combining both cases we conclude
\begin{align*}
\Pr[\mathcal{B}] & \;\le\; \sum_{d,s} n^s \left(2^{-\frac12 c q(h) s} + 2^{- \frac12s (np)^{6\xi}}\right) \;=\;o(1),
\end{align*}
as claimed.
\end{proof}

Given a partial coloring $\calC$ of $G_{n,p}$, a vertex $v$ is dangerous if the set $A(v,\calC)$ is small. These sets $A(v,\calC)$ shrink during the game process, while the level increases. Our last lemma of this section puts the total number of small sets $A(v,\calC)$ in relation with the level $\ell(\calC)$ and shows that with high probability, every partial coloring has the property that there are not many dangerous vertices with respect to $\ell(\calC)$.

\begin{lem}\label{l:sudakov}Let $p, \xi, k, \beta(\cdot), \gamma(\cdot)$ be defined as above and let $h < 1$ be a constant. For all partial colorings $\calC$, define
\[B(h,\calC) := \{v \in V: a(v,\calC) < \beta(h)/2\}.\]
Denote by $\mathcal{E}$ the event that there exists a partial coloring $\calC$ of the graph $G_{n,p}$ such that 
\begin{list}{$\bullet$\hfill}{\labelwidth0.4cm\leftmargin0.5cm\labelsep0.1cm\topsep0pt\itemsep0pt}
\item $\ell(\calC) \le (h - \xi) \log_b np$,
\item $|B(h,\calC)|\ge \gamma(h)$, and
\item $|C_i| \le \ell(\calC) + \xi \log_b np$ holds for at least $\frac{\xi}{10}k$ color classes $\calC_i$.
\end{list}
Then 
\[\Pr[\mathcal{E}]=o(1).\]
\end{lem}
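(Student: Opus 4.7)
The plan is to convert the existential quantifier over partial colorings into a direct union bound over families of disjoint vertex subsets of $V$ together with a candidate witness set $B$, and then apply a Chernoff estimate to each vertex of $B$. Assume $\mathcal{E}$ holds and fix an offending partial coloring $\calC$. Pick any $m:=\lceil\xi k/10\rceil$ small color classes and relabel them $\calC_1,\ldots,\calC_m$; by the third hypothesis each has $|\calC_i|\le\ell(\calC)+\xi\log_b np\le h\log_b np$. Also pick an arbitrary subset $B\subseteq B(h,\calC)$ with $|B|=\gamma(h)$ (we may assume $\gamma(h)\le n$, else the event is vacuously impossible). For every $v\in B$ the defining inequality $a(v,\calC)<\beta(h)/2$ implies in particular that the number of indices $i\in\{1,\ldots,m\}$ with $N(v)\cap\calC_i=\emptyset$ is strictly less than $\beta(h)/2$.

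It therefore suffices to bound the probability that in $G_{n,p}$ there exist disjoint subsets $\calC_1,\ldots,\calC_m\subseteq V$ with each $|\calC_i|\le h\log_b np$ and a set $B\subseteq V$ with $|B|=\gamma(h)$ such that, for every $v\in B$, the quantity $X(v):=|\{i:N(v)\cap\calC_i=\emptyset\}|$ is at most $\beta(h)/2$. For a fixed configuration, $X(v)$ is a sum of independent Bernoulli indicators, one per index $i$, each of success probability at least $(1-p)^{h\log_b np}=(np)^{-h}$ (the possibility that $v\in\calC_{i_0}$ for some $i_0$ only alters one exponent by $1$ and affects neither the lower bound nor the independence). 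Hence $\mathds{E}[X(v)]\ge m(np)^{-h}\ge\beta(h)$, and Theorem~\ref{thm:chernoff}(i) with $\delta=1/2$ gives $\Pr[X(v)\le\beta(h)/2]\le e^{-\beta(h)/8}$. The events for distinct $v\in B$ depend on disjoint edges of $G_{n,p}$, hence are mutually independent, so for the fixed configuration
\[
\Pr\bigl[\forall v\in B:\ X(v)\le\beta(h)/2\bigr]\;\le\;e^{-\gamma(h)\beta(h)/8}\;=\;e^{-5n\log n/4}\;=\;n^{-5n/4},
\]
using the identity $\gamma(h)\beta(h)=10n\log n$.

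Finally, the union-bound cost is $n^{mC}\cdot n^{\gamma(h)}\cdot n^{o(n)}$, where $C:=\lceil h\log_b np\rceil$; the first factor bounds the number of $m$-tuples of disjoint subsets (of maximum size $C$), the second the number of candidate $B$'s, and the third absorbs the varying sizes of the $\calC_i$'s. Plugging in $\xi=\frac{1}{10}(1-1/\alpha)$ and $k=\alpha n/\log_b np$ yields
\[
mC\;\le\;\frac{\xi\alpha h}{10}n+o(n)\;=\;\frac{(\alpha-1)h}{100}n+o(n),
\]
which is a small fraction of $n$, and $\gamma(h)=\Theta((np)^h\log n\log_b np)=o(n)$ because $h<1$ is a fixed constant and $np=n^{1-o(1)}$. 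Hence $mC+\gamma(h)+o(n)$ is at most a small constant multiple of $n$, much less than $5n/4$, and consequently $\Pr[\mathcal{E}]\le n^{-\Omega(n)}=o(1)$. The main obstacle is quantitative: one must verify that the union-bound cost $(mC+\gamma(h))\log n$ is dominated by the Chernoff saving $\gamma(h)\beta(h)/8=(5/4)n\log n$. This works because $\xi$ is small (keeping $mC$ to a small fraction of $n$) and because $h<1$ is constant (giving $\gamma(h)=o(n)$, without which the $n^{\gamma(h)}$ cost of choosing $B$ would overwhelm the Chernoff gain).
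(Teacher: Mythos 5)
Your proof is correct and follows essentially the same route as the paper's: lower-bound the expected number of available colors at a vertex by restricting to the $\Theta(\xi k)$ small color classes (giving $\mathds{E}\ge\beta(h)$), apply Chernoff to get $e^{-\beta(h)/8}$ per vertex, use independence across the $\gamma(h)$ vertices of $B$, and finish with a union bound against $n^{-5n/4}$. The only cosmetic difference is that you union over the choice of the small color classes and of $B$ (cost $n^{mC+\gamma(h)+o(n)}$) whereas the paper unions crudely over all $(k+1)^n$ partial colorings; both counts are comfortably below the Chernoff saving.
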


\begin{proof}Let $h < 1$ and let $\calC$ be a partial coloring of $G_{n,p}$ such that $\ell(\calC) \le (h - \xi) \log_b np$. Denote by $I$ the set of all color classes $C_i$ which satisfy $|C_i| \le \ell(\calC) + \xi \log_b np \le h \log_b np$. Assume that $|I| \geq \frac{\xi}{10}k$. Then for any vertex $v \in V$, it holds
\begin{equation}\label{ineq:exp}
\mathds{E}[a(v,\mathcal{C})]  \;=\; \sum_{i=1}^k (1-p)^{|\calC_i|} \;\geq\; \sum_{i \in I} (1-p)^{|\calC_i|} 
  \;\geq\; \frac{\xi k}{10} (1-p)^{h \log_b np} \;=\;  \frac{\alpha \xi n (np)^{-h}}{10\log_b np} \;=\; \beta(h).
\end{equation}

Note that the number of colors available at a fixed vertex $v$ is the sum of independent indicator variables $X_i$, where $X_i = 1$ if and only if $v$ has no neighbors in $C_i$. Chernoff bounds thus imply
\[\Pr[a(v,\calC) < \beta(h)/2] \;\leq\; e^{-\beta(h)/8},\]
and therefore
\[\Pr[|B(h, \calC)| \ge \gamma(h)] \;\leq\; \binom{n}{\gamma(h)} e^{-\beta(h)\gamma(h)/8} = \binom{n}{\gamma(h)} n^{-\frac{5}{4}n}.\]
There are $(k+1)^n$ different partial colorings of the graph. We finish the argument by applying a union bound over all partial colorings that satisfy the assumption of the lemma. This yields
\[\Pr[\mathcal{E}] \leq (k+1)^{n} \binom{n}{\gamma(h)} n^{-\frac{5}{4}n} \leq n^{n+o(n)-\frac{5}{4}n} = o(1).\]
\end{proof}

\subsection{Proof of Lemma~\ref{l:midlevel}}\label{sec:midlevel}

Before we consider the proof of Lemma~\ref{l:midlevel} we study a balls-and-bins game.
Suppose we have~$k$ bins and two players~$M$ and~$B$ who alternately put a ball into one of the bins. We don't know how~$B$ plays, but~$M$ chooses always the bin with minimum load. But we have the following two exceptions: $B$ steals every~$N$-th ball of $M$ and plays this ball himself, and~$B$ can remove bins at any point in time.

In this model we will use $t$ and $\ell(t)$ in a similar way as defined in the setting of the coloring game. That is,~$t$ denotes the time (number of balls played) and $\ell(t)$ denotes the number of balls in the bin with minimum number of balls at time~$t$. In addition, denote by $t(\ell)$ the minimal time $t$ such that $\ell(t)=\ell$.

\begin{lem}\label{l:ballgame}
Consider the ball-game described above with~$k$ bins and parameter~$N$. Let $a \in \N$ and denote by $C(\ell)$ the total number of balls which have been thrown at loads $\ell'$, $\ell < \ell' \leq a$, until time $t(\ell)$. Then it holds for all $\ell < a$ that
\[C(\ell) \leq \frac{k \ell(N+1)(a-\ell)}{(N-1)(a-1)}.\]
\end{lem}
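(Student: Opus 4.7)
My plan is to exploit Maker's min-load rule so that the count $C(\ell)$ reduces to counting Breaker throws, use the stealing rule to bound the Breaker/Maker ratio, and extract the factor $(a-\ell)/(a-1)$ via a monotonicity/averaging argument on the per-post-load counts.

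First I would show that Maker contributes nothing to $C(\ell)$. Because Maker always picks a bin at the current level $\ell(t)$, every Maker throw has post-throw load exactly $\ell(t)+1$. Since $\ell(\cdot)$ is non-decreasing and strictly less than $\ell$ for every $t<t(\ell)$, each Maker throw played by time $t(\ell)$ has post-load at most $\ell$; thus $C(\ell)$ equals the number of Breaker throws at post-load in $(\ell,a]$ up to time $t(\ell)$.

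Second, I would bound Breaker's total throws. The alternation of moves together with the theft of one in every $N$ of Maker's turns gives that in any prefix Breaker plays at most $\frac{N+1}{N-1}$ times as many balls as Maker. Moreover, by time $t(\ell)$ every bin still in play has load at least $\ell$, which forces the $k\ell$ slots at post-loads $1,\dots,\ell$ to be entirely filled. Hence $M\le k\ell$ and Breaker has played at most $\frac{(N+1)k\ell}{N-1}$ balls in total.

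Third, for the refinement by the factor $(a-\ell)/(a-1)$, I would work with the per-post-load counts $c_i$ of balls played by time $t(\ell)$. Because each bin must pass through load $i-1$ before reaching $i$, the sequence $(c_i)$ is non-increasing in $i$; together with $c_1=c_2=\cdots=c_\ell=k$, the non-increasing tail $(c_{\ell+1},\dots,c_a)$ is controlled by the Breaker budget bounded in the previous step. A Chebyshev-type averaging of this tail against the full sum $\sum_{i=2}^{a} c_i$, which spreads the budget across the $a-1$ available post-loads $2,\dots,a$, should yield $C(\ell)\le \frac{a-\ell}{a-1}\cdot \frac{(N+1)k\ell}{N-1}$, which is the claimed inequality.

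The main obstacle I expect is in this last step: a naive application of monotonicity only recovers $C(\ell)\le k(a-\ell)$ or the crude $C(\ell)\le \frac{(N+1)k\ell}{N-1}$, neither of which matches the stated bound. I anticipate the correct interpolation requires splitting $[0,t(\ell)]$ into the level-raising phases $(t(i-1),t(i)]$ for $i=1,\dots,\ell$, using that in phase $i$ every Maker ball has post-load exactly $i$ and every Breaker ball has post-load at least $i$, and then carefully telescoping the per-phase contributions to $C(\ell)$ against the aggregate budget and the per-post-load cap in order to arrive at the stated bound.
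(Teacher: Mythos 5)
Your first two steps are sound and match the ingredients of the paper's argument: Maker's min-load rule means every Maker ball thrown before $t(\ell)$ has post-load at most $\ell$, so $C(\ell)$ only counts Breaker balls, and the stealing rule gives the ratio $\frac{N+1}{N-1}$ between Breaker's and Maker's throws in any phase. But the third step, where the factor $\frac{a-\ell}{a-1}$ must come from, is not actually carried out: you explicitly state that monotonicity of the per-post-load counts only yields the two crude bounds, and then ``anticipate'' that some telescoping over phases will work without exhibiting it. That is the entire content of the lemma, so as written the proof has a genuine gap.

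The mechanism you are missing is a feedback between $C(\ell)$ and the number of balls playable in the \emph{next} phase. The paper proves the bound by induction on $\ell$: if $x$ denotes the number of bins with load exactly $\ell$ at time $t(\ell)$, then every ball counted in $C(\ell)$ sits in a bin of load greater than $\ell$, and each such bin holds at most $a-\ell$ counted balls, whence $x \le k - \frac{C(\ell)}{a-\ell}$. Since Maker throws at most $x$ real balls in the phase $(t(\ell),t(\ell+1)]$, Breaker throws at most $\frac{N+1}{N-1}x$ there, and the $k-x$ balls at post-load exactly $\ell+1$ drop out of the count, one gets
$C(\ell+1) \le C(\ell) + \frac{N+1}{N-1}x - (k-x)$
with $x$ bounded as above; plugging in the induction hypothesis then reproduces the closed form exactly. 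Note that this is a self-correcting recursion --- a large $C(\ell)$ forces $x$ to be small, which throttles the next phase --- and no global averaging of a fixed Breaker budget $\frac{N+1}{N-1}k\ell$ over the post-loads $2,\dots,a$ can substitute for it; indeed your premise $c_1=\cdots=c_\ell=k$ is already not available, since Breaker may remove bins, so only the one-sided inequality on $x$ holds. To repair your proof you would need to set up precisely this per-phase recursion and solve it, which is the paper's proof.
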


\begin{proof}We prove this Lemma by induction over~$\ell$. For $\ell=0$, clearly $C(0)=0$ which agrees with the formula. Let $\ell < a-1$ and suppose the statement is true for $\ell$. By definition, all non-removed bins have load at least~$\ell$ at time $t(\ell)$. Denote by~$x$ the number of bins that have load exactly~$\ell$ at time $t(\ell)$. Using the definition of $C(\ell)$ we observe that $x \leq k - \frac{C(\ell)}{a-\ell}$.

$M$ chooses always minimum-loaded bins, therefore~$M$ can throw at most $x$ balls between $t(\ell)$ and $t(\ell+1)$. Note that if~$B$ removes some bins in this time period, then the upper-bound on $M$-balls is even smaller. Since $B$ steals every $N$-th ball of~$M$, $B$ can play at most $\frac{N+1}{N-1}x$ balls between $t(\ell)$ and $t(\ell+1)$.

Recall that $C(\ell+1)$ counts the number of balls which have been thrown at loads between $\ell+1$ and~$a$ until time $t(\ell+1)$. We may count all $\frac{N+1}{N-1}x$ $B$-balls, but the balls at load $\ell+1$ at time $t(\ell)$ don't count for $C(\ell+1)$.  It follows that
\begin{align*}
C(\ell+1) & \;\leq\; C(\ell) + \frac{N+1}{N-1}x - (k-x) \;=\; C(\ell) - k + \frac{2N}{N-1}x \\
& \;\leq\; C(\ell) - k + \left(k-\frac{C(\ell)}{a-\ell}\right)\frac{2N}{N-1}\\
& \;\leq\; k \frac{N+1}{N-1} + C(\ell) \left(1-\frac{2N}{(a-\ell)(N-1)}\right)\\
& \underset{\text{ind.}}{\;\leq\;} k\frac{N+1}{N-1}+\frac{k \ell (N+1)(a-\ell)}{(N-1)(a-1)}\cdot \frac{a-\ell-2}{a-\ell}\\
& \;=\; k\frac{N+1}{N-1}\left(1+\frac{\ell(a-\ell-2)}{a-1}\right) 
 \;=\; k\frac{N+1}{N-1}\frac{(a-\ell-1)(\ell+1)}{a-1}.
\end{align*}
\end{proof}

A direct consequence of Lemma~\ref{l:ballgame} is that there exists always a constant fraction of bins whose load is close to the actual level of the process.

\begin{cor}\label{cor:ball}
Let $0 < \xi < 1$ and let $a$ be an integer large enough such that $\frac{(1-\xi)a+1}{a-1} \leq 1 - \frac{\xi}{2}$. Consider the ball-game described above with~$k$ bins and parameter $N \geq \frac{8}{\xi}$, where we suppose that~$B$ removes at most $\frac{\xi}{8}k$ bins with load less than~$a$.  Let $t$ be a point in time such that $\ell(t') \leq a(1-\xi)$. Then there exist at least $\frac{\xi}{8}k$ non-removed bins which have load at most $a$ at time $t$.
\end{cor}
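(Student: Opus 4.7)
The plan is to reduce the claim to Lemma~\ref{l:ballgame}. I would set $\ell:=\ell(t)$, which by hypothesis satisfies $\ell\le(1-\xi)a$, and let $t_0:=t(\ell)\le t$ be the first time the minimum load reaches $\ell$. Applying Lemma~\ref{l:ballgame} at $t_0$ with parameters $\ell$ and $a$ yields
\[
C(\ell)\;\le\;\frac{k\ell(N+1)(a-\ell)}{(N-1)(a-1)}.
\]

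The key observation for the next step is that every bin whose load strictly exceeds $a$ by time $t_0$ has received exactly one ball at each of the load levels $\ell+1,\ldots,a$, and hence contributes precisely $a-\ell$ balls to $C(\ell)$. Dividing the bound above by $a-\ell$ therefore gives
\[
|\{\text{bins with load}>a\text{ at time } t_0\}|\;\le\;\frac{k\ell(N+1)}{(N-1)(a-1)}.
\]
Plugging in $\ell\le(1-\xi)a$, the hypothesis $\frac{(1-\xi)a+1}{a-1}\le 1-\xi/2$, and $N\ge 8/\xi$, a short computation (the crucial inequality being $(2-\xi)/(N-1)\le\xi/4$, which is equivalent to $N\ge 8/\xi -3$) shows that this expression is at most $k(1-\xi/4)$. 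Consequently at least $\xi k/4$ bins have load at most $a$ at time $t_0$; subtracting the at most $\xi k/8$ bins that Breaker has ever removed at load less than $a$ leaves at least $\xi k/4 - \xi k/8 = \xi k/8$ non-removed bins with load at most $a$ at $t_0$.

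To pass from $t_0$ to the target time $t$, I would note that throughout the interval $[t_0,\,t(\ell+1))$ containing $t$ the minimum load stays equal to $\ell$, so rerunning the counting argument at $t$ (with $C(\ell)$ replaced by the number of balls thrown at loads in $(\ell,a]$ up to time $t$, bounded via Lemma~\ref{l:ballgame} applied to $\ell+1$ plus an $O(k)$ correction for balls landing at load $\ell+1$) only changes the main estimate by a lower-order additive term which is absorbed in the $\xi k/8$ surplus. The principal technical obstacle is the tight calibration of the constants: $N\ge 8/\xi$ is exactly what is needed to force $(1-\xi/2)(1+2/(N-1))\le 1-\xi/4$, and the removal budget $\xi k/8$ is precisely half of the initial surplus $\xi k/4$. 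Any loosening of either hypothesis on $a$ or $N$, or of the removal bound, would break this balance, so beyond the appeal to Lemma~\ref{l:ballgame} the proof is essentially a careful arithmetic accounting.
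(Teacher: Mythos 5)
Your overall route --- reduce everything to Lemma~\ref{l:ballgame}, divide the ball count by the $a-\ell$ balls that each over-full bin must contribute, and then run the arithmetic using the hypotheses on $a$ and $N$ --- is exactly the paper's, and your calibration of the constants (in particular the reduction to $\frac{2-\xi}{N-1}\le\frac{\xi}{4}$) is correct. The gap is in the passage from $t_0=t(\ell(t))$ to $t$. Your main estimate bounds the number of over-full bins at time $t_0\le t$, which is the wrong endpoint: loads only increase, so additional bins may exceed $a$ between $t_0$ and $t$, and a bound at $t_0$ says nothing about time $t$. Your proposed repair --- count balls at loads in $(\ell,a]$ up to time $t$ via Lemma~\ref{l:ballgame} applied to $\ell+1$ ``plus an $O(k)$ correction'' that is ``absorbed in the $\xi k/8$ surplus'' --- does not close. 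By your own accounting the final count is exactly $\xi k/4-\xi k/8=\xi k/8$, i.e.\ the target itself, so there is no order-$k$ surplus to absorb anything (the only genuine slack, coming from $N\ge 8/\xi$ versus the required $N\ge 8/\xi-3$, is $\Theta(\xi^2 k)$). Moreover the correction itself, after dividing by $a-\ell$, contributes at least $k/(a-\ell)\ge k/a$, which is $\Theta(k)$ for the constant $a$ that the corollary permits and hence not lower-order at all.

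The clean fix, and what the paper does, is to look \emph{forward} rather than backward: set $\ell':=\ell(t)+1$ and (hypothetically) continue the game until $t(\ell')\ge t$. Every bin with load at least $a$ at time $t(\ell')$ contributes $a-\ell'$ balls to $C(\ell')$, so there are at most $C(\ell')/(a-\ell')\le \frac{k\ell'(N+1)}{(N-1)(a-1)}$ such bins at $t(\ell')$, and by monotonicity of the loads the same bound already holds at time $t$. Since $\ell'\le(1-\xi)a+1$, the hypothesis $\frac{(1-\xi)a+1}{a-1}\le 1-\frac{\xi}{2}$ --- note the $+1$ in the numerator, which is there precisely to accommodate this shift from $\ell$ to $\ell+1$ --- yields the bound $\frac{N+1}{N-1}(1-\frac{\xi}{2})k\le(1-\frac{\xi}{4})k$ with no correction term, after which the rest of your arithmetic (subtracting the $\frac{\xi}{8}k$ removed bins) goes through verbatim.
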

\begin{proof}
Let $\ell' := \ell(t)+1$ and suppose that after time~$t$, the two players continue with the process until $t' := t(\ell')$, that is, until the game reaches a new level. Then $C(\ell') \leq \frac{k \ell'(N+1)(a-\ell')}{(N-1)(a-1)}$ holds by Lemma~\ref{l:ballgame}. There are at most $\frac{C(\ell')}{a-\ell'}$ bins which have load at least~$a$ at time~$t'$. However, since the bin-loads are increasing, this property holds also at time~$t$. Taking into account also the removed bins, we obtain that the total number of non-removed bins which have load at most $a$ at time~$t$ is at least
\begin{align*}
k-\frac{C(\ell')}{a-\ell'}-\frac{\xi}{8}k & \;\geq\; k\left(1-\frac{\ell'(N+1)}{(N-1)(a-1)}-\frac{\xi}{8}\right)\\ & \;\geq\; k\left(1-\frac{N+1}{N-1}\cdot\frac{(1-\xi)a+1}{a-1}-\frac{\xi}{8}\right)\\
& \;\geq\; k\left(1-\frac{\xi}{8}-\frac{N+1}{N-1}\cdot\left(1-\frac{\xi}{2}\right)\right),
\end{align*}
where we used $\ell' \leq a(1-\xi)+1$ and our choice of~$a$. It remains to show that $\frac{N+1}{N-1}(1-\frac{\xi}{2}) \leq 1-\frac{\xi}{4}$. But this inequality is guaranteed by $N \geq \frac{8}{\xi}$.
\end{proof}
The connection between this model and the coloring game is straightforward: the bins correspond to the colors, and if a ball falls into a bin this means that some vertex has been assigned with this color. Maker's moves of second type are equal to~$M$'s strategy of playing the balls. In order to perform a worst-case analysis, we donate Maker's moves of first type to player~$B$ in the ball game. At last, if a color is eliminated, we model this by a removed bin. This allows us to prove Lemma~\ref{l:midlevel} using the ball model.

\begin{proof}[Proof of Lemma~\ref{l:midlevel}]
Assume that the random graph does not satisfy event~$\mathcal{A}$ from Lemma~\ref{l:eliminated}, which happens with probability $1-o(1)$. Then Corollary~\ref{l:levelbound} and the definition of $\mathcal{A}$ imply for all $t \le n-(np)^{1-4\xi}$ that we have $\ell(t) \leq (\frac{1}{\alpha}+\xi)\log_b np$ and that at most $\lceil (np)^{1-4\xi}\rceil \ll \frac{\xi}{8}k$ colors have been eliminated before they reached size $\ell(t)$.
Let $a := \ell(t) + \xi\log_b np$. Then
\[\frac{\ell(t)}{a} \;=\; \frac{\ell(t)}{\ell(t)+\xi\log_b np} \;\leq\; \frac{1/\alpha + \xi}{1/\alpha + 2\xi} \;\leq\;  1-\xi,\]
which ensures $\ell(t) \leq a(1-\xi)$. In addition we have for $n$ large enough that
\[\frac{(1-\xi)a+1}{a-1} \;=\; (1-\xi)(1+o(1)) \;\leq\; 1 - \frac{\xi}{2}.\]
If we thus set $N=n(\alpha)$ to an arbitrary integer of size at least $8/\xi$ then the assumptions of Corollary~\ref{cor:ball} are all satisfied and the lemma follows.
\end{proof}

\subsection{Proof of Lemma~\ref{l:color}}\label{sec:color}

Define
\begin{equation}\label{eq:def:lc}
L := \frac{h+2\xi}{2\xi} \quad \text{and}\quad c :=  c(h) =  \frac{1}{L+1}.
\end{equation}
Note that both $L$ and $c$ are constants depending only on $\alpha$ and $h$. In the following we assume that the random graph does not satisfy the event ${\cal B}$ of  Lemma~\ref{l:shuffle} for $h$ and $c$. We show that then the conclusions of Lemma~\ref{l:color} hold deterministically.

Suppose that we are given a point in time~$t'$, a set of uncolored vertices~$U$ and a constant $4\xi < h\le \frac{1}{\alpha} + 3\xi$ which satisfy conditions $(i)$ and $(ii)$ from Lemma~\ref{l:applyboxgame}. Furthermore, assume that~$|U| \leq 2N\gamma(h)+1$ and assume that at time~$t'$ we have $|\calC_i| \ge (h-4\xi)\log_b np$ for every active color~$i$. We are interested in finding an arrangement of subsets $S(v) \subset A(v,\calC')$ such that $|S(v)| \leq q(h)$ holds for all vertices $v \in U$ and such that every color $i$ appears in at most $q(h)$ sets $A(v,\calC') \setminus S(v)$. 


\begin{algorithm}
\caption{{\sc ColorArranging}}
\begin{algorithmic}

\FOR{\textbf{all} $v \in U$}
\STATE $S(v) \leftarrow \emptyset$ 
\ENDFOR
\FOR{$i = 1$ to $k$}
\STATE $U(i) \leftarrow \{v \in U: i \in A(v,\calC')\}$
\IF{$|U(i)| > q(h)$}
\STATE $U'(i) \leftarrow \{$the $q(h)$ vertices $v \in U(i)$ with largest $|S(v)|\}$
\FOR{\textbf{all} $v \in U(i) \setminus U'(i)$}
\STATE put $i$ into $S(v)$
\ENDFOR
\ENDIF
\ENDFOR
\end{algorithmic}
\end{algorithm}
We use the algorithm {\sc ColorArranging} to find the sets $S(v)$. Note that this algorithm is essentially a greedy algorithm. We start with sets $S(v)$ that are empty for every vertex. Then we consider the colors one by one in an arbitrary order. Let $U(i)$ denote the set of all vertices in $U$ where color $i$ is available at time $t'$. We choose a subset $U'(i)\subseteq U(i)$ that consists of $q(h)$ vertices $v \in U(i)$ which (currently) have the largest set $S(v)$. (We break ties arbitrarily.) For all $v \in U(i)\setminus U'(i)$ we then add $i$ to the set $S(v)$.

At the end of the algorithm, every color $i$ appears by construction in at most $q(h)$ many sets $A(v,\calC') \setminus S(v)$. We need to show that the constructed arrangement also has the property that $s(v) := |S(v)| \leq q(h)$ for all $v \in U$. Assume for a contradiction that after termination of the algorithm there exists a vertex $w\in U$ such that  $s(w) > q(h)$. 
In the reminder of this section we show that the assumption that  the event ${\cal B}$ of  Lemma~\ref{l:shuffle} does not hold for $h$ and $c$ suffices to make this conclusion.
For $K\in\{0,\ldots,L\}$ let
\[W_K := \{v \in U: s(v) > (1-K  c)q(h)\}.\]
By assumption we have $|W_0| \ge 1$. Note also that all sets $W_K$ are subsets of $U$ and we thus have 
\begin{equation}\label{ineq:indhyp0}
|W_{K}| \leq |U| = O(\gamma(h)) \ll (np)^{h+\xi},
\end{equation}
where we use $(\ref{eq:boundonp})$, which holds by our lower bound on $p$. We will show by induction over $K$ that  
\begin{equation}\label{ineq:indhyp}
|W_{K}| \geq   c^{K} q(h)^{2K}(np)^{-12L\xi} \qquad\text{for all $K\in\{0,\ldots,L\}$}.
\end{equation}
Observe that this completes the proof of  Lemma~\ref{l:color}, as
\begin{align*}
 c^L q(h)^{2L} (np)^{-12L\xi} & \;=\;   c^L (q(h) (np)^{-6\xi})^{2L} \;\gg\;  \frac{  c^L (np)^{(1-h - 6\xi)2 L}}{\log^{8L} n}\;\gg\;
 (np)^{h+\xi},
%
\end{align*}
where the last step follows from $1-h-6\xi \ge \xi$ (cf.\ upper bound on $h$ and the definition of $\xi=\frac1{10}(1-\frac1\alpha)$) and our choice of $L$. That is, the validity of $(\ref{ineq:indhyp})$  for $K=L$ contradicts $(\ref{ineq:indhyp0})$, yielding the desired contradiction. 
 
%

It remains to prove that $(\ref{ineq:indhyp})$ holds. 
Let $K \le L$ and suppose (\ref{ineq:indhyp}) is true for $K-1$. By definition we have
$$s(v) \geq (1-(K-1) c)q(h)$$
for all $v \in W_{K-1}$. Thus, for every $v \in W_{K-1}$ there have to exist $c q(h)$ colors that were added to $S(v)$ at a time when $S(v)$ contained already at least  $(1-K  c)q(h)$ colors. We denote by $I(v)$ the set of exactly these colors, and put
\[I_{K-1} := \bigcup_{v \in W_{K-1}} I(v).\]
As we assumed that the event $\mathcal{B}$ of Lemma~\ref{l:shuffle} does not hold for $h$ and $c$, we deduce from property $(i)$ that for $S = W_{K-1}$ and $d=|I_{K-1}|$ we have
\begin{equation}\label{ineq:colsetsize}
|I_{K-1}| \geq  c q(h) \cdot |W_{K-1}| \cdot (np)^{-6\xi}
\end{equation}

All colors from the set $I_{K-1}$ have been added to some set $S(v)$ which contained already at least $(1-K  c)q(h)$ colors. By construction of the algorithm this means that for every $i \in I_{K-1}$ there were (at the time when color $i$ was processed) at least $q(h)$ other vertices $v'$ with $i \in A(v',\mathcal{C}')$ that also satisfied $s(v') > (1-K  c)q(h)$, hence all these vertices $v'$ lie in the set $W_K$. Applying Lemma~\ref{l:shuffle} with $S=W_K$ and $d =I_{K-1}$, we see that property $(ii)$ implies
\[
|W_K|  \stackrel{(ii)}\ge q(h) (np)^{-6\xi}|I_{K-1}| \stackrel{(\ref{ineq:colsetsize})}\ge  c q(h)^2 (np)^{-12\xi} |W_{K-1}| \stackrel{i.a.}\ge  c^{K} q(h)^{2K} (np)^{-12K\xi},
\]
and we conclude that (\ref{ineq:indhyp}) holds for $W_K$ as well.

\qed

\subsection{Proof of Theorem~\ref{thm:main}}\label{sec:proofofthm}

We define a set of constants $H := \{h_1, \ldots, h_J \}$
as follows: 
$h_1 := \frac{1}{2}-\xi$, where $\xi = \frac{1}{10}(1-\frac{1}{\alpha})$ as before. For $j>1$ we define $h_j := h_{j-1} + \xi$ and denote by
$J$ the smallest integer such that $h_J \geq \frac{1}{\alpha}+2\xi$. 

For the reminder of the proof we assume that all low-probability events in Section~3.1 do not occur for any pair $(h, c(h))$, where $h \in H$ and $c(h)$ as defined in $(\ref{eq:def:lc})$. We show that in this case Maker will win the game {deterministically}. Since $H$ is a finite set, this will thus conclude the proof of Theorem~\ref{thm:main}. 

Assume that Maker uses our proposed strategy, but at time $t$, he loses the coloring game. I.e., we assume that after $t-1$ rounds of the game, the two players obtain a partial coloring $\calC$ such that there is at least one uncolored vertex $v_0$ where no color is available. Our goal is to apply Lemma~\ref{l:applyboxgame} in order to show that Maker could not lose the game at time $t$.

Depending on the value of the time $t$ at which Maker supposedly loses the game we define a constant $h=h(\calC) \in H$ as follows. If $\calC$ contains at least $\lceil (np)^{1-4\xi}\rceil$ uncolored vertices, put
\[h(\calC) := \min\{h \in H: h \log_b np > \ell(\calC) + \xi\log_b np\}.\]
Note that $\ell(\calC) < (\frac{1}{\alpha} + \xi)\log_b np$ holds by Corollary~\ref{l:levelbound} and therefore $h(\calC)$ is well-defined.

If $\calC$ contains less than $\lceil (np)^{1-4\xi}\rceil$ uncolored vertices, we subsequently 
 remove the color assignment from the vertices which have been colored last in order to obtain a coloring $\overline{\calC}$ with exactly $\lceil (np)^{1-4\xi}\rceil$ uncolored vertices. We then let
$h(\calC) := h(\overline{\calC})$.

Let $j \in \{1, \ldots, J\}$ and suppose that $h(\calC) = h_j$. If $\calC$ contains at least $\lceil (np)^{1-4\xi} \rceil$ uncolored vertices, then it follows by Lemma~\ref{l:midlevel} that $|\calC_i| \le \ell(\calC) + \xi \log_b np$ holds for at least $\frac{\xi}{8}k$ color classes $\calC_i$. In the special case where $\calC$ contains less than $\lceil (np)^{1-4\xi} \rceil$ uncolored vertices, the same holds for the partial coloring $\overline{\calC}$ which can be extended to $\calC$ by coloring at most $\lceil (np)^{1-4\xi} \rceil$ additional vertices. Since $(np)^{1-4\xi} = o(k)$, we deduce that in this special case, at least $\frac{\xi}{10}k$ colors $i$ satisfy $|\calC_i| \le \ell(\calC) + \xi \log_b np$. 

We now define $t'$ as the last time before $t$ when Maker colored a vertex $v$ with at least $\beta(h(\calC))/2$ available colors in a move of his first type. By definition of such a move, we know that all uncolored vertices had at least $\beta(h(\calC))/2$ available colors at this time. That is, $t'$ and $h(\calC)$ satisfy conditions (i) and (ii) of Lemma~\ref{l:applyboxgame}. Moreover, by the definition of $t'$, we know that between $t'$ and $t$ Maker  always colored a vertex with less than $\beta(h(\calC))/2$ available colors in his moves of the first type. Lemma~\ref{l:sudakov} implies that even at time $t$ the number of vertices for which $a(v,\calC)$ is less than $\beta(h(\calC))/2$ is bounded by $\gamma(h(\calC))$. We thus deduce that Maker can have colored at most  $\gamma(h(\calC))$ vertices with a move of his first type between time $t'$ and time $t$. With that we have
\begin{equation}\label{eq:sizeofendgame}
t-t'+1 \leq 2N \cdot \gamma(h(\calC)) + 1 = o(n),
\end{equation}
where $N = N(\alpha)$ is the parameter from Maker's strategy. 
If we thus use
$U$ to denote the set of all vertices which have been colored in the period $[t',t-1]$, together with $v_0$, we have that 
\[|U| \leq 2N\gamma(h(\calC))+1\]
and $U$ thus satisfies the prerequisite of Lemma~\ref{l:color}. It remains to check  that the partial coloring $\calC'$  obtained after $t'-1$ rounds satisfies 
\begin{equation}\label{ineq:finalstep}
|C_i '| \ge (h(\calC)-4\xi)\log_b np
\end{equation}
for every active color $i \in \{1, \ldots k\}$, because then it follows from Lemma~\ref{l:color}  that condition~$(iii)$ of Lemma~\ref{l:applyboxgame} is fulfilled as well and Lemma~\ref{l:applyboxgame}  thus implies that Maker cannot lose the game at time $t$, since the induced box game is a Maker's win. 

Below we will prove (\ref{ineq:finalstep}) for the case $j>1$. If $j=1$ we 
show directly that Maker cannot lose the coloring game in this phase of the game. Indeed, suppose $j=1$ and suppose Maker has colored at time $t'$ a vertex $w$ such that $a(w,\mathcal{C}') \geq \beta(h_1)/2$. Because Maker has used there a move of first type, it follows $a(v,\mathcal{C}') \geq \beta(h_1)/2$ for every uncolored vertex at time $t'$. Note that
\[\beta(h_1) = \Theta((np)^{1/2+\xi} \log^{-1}{np}) \quad \text{and}\quad \gamma(h_1) = \Theta((np)^{1/2-\xi}p^{-1} \log{np}\log n).\]
By assumption $p$ satisfies $(\ref{eq:boundonp})$ and we deduce that $\beta(h_1) \gg \gamma(h_1)$. Hence, for $n$ large enough, (\ref{eq:sizeofendgame}) implies $\beta(h_1)/2 > t-t'+1$. As with every move of the game a vertex can lose at most one of its available colors this thus means that at time $t$ no vertex can have lost all available colors, contradicting our assumption that Maker lost the game at time $t$.

Therefore we can assume from now on that $j > 1$. For this we define  two additional point in time: 
$$\hat t = \min\{t, n-\lceil (np)^{1-4\xi} \rceil\} \quad\text{and}\quad t^{\ast} := \min\{t \in \mathds{N}: \ell(t) = (h(\calC) - 4\xi) \log_b np\}.$$ 
Observe that $\hat t\le t$. We will show below that $\hat t-t^{\ast} = \Theta(n)$. Thus, we have $t-t^{\ast} = \Theta(n)$ as well and $(\ref{eq:sizeofendgame})$ thus implies that $t^{\ast} \ll t'$. By definition, $\ell(t^{\ast}) \ge (h(\calC)-4\xi)\log_b np$. Clearly, the level of the game is increasing in time and (\ref{ineq:finalstep}) follows.

 It remains to show that $\hat t-t^{\ast} = \Theta(n)$.
Observe that Lemma~\ref{l:midlevel} implies that at time $t^{\ast}$ there exist at least $\frac{\xi}{8}k$ active colors classes of size at most $(h(\calC) - 3\xi) \log_b np$. 
On the other hand, note that $j > 1$ implies that 
$$\ell(\hat t) \geq (h(\calC) - 2\xi) \log_b np.$$
Hence all of these $\frac{\xi}{8}k$ colors either reached size at least $(h(\calC)-2\xi)\log_b np$ at time $\hat t$ or have been eliminated. From Lemma~\ref{l:eliminated} we know that by time $\hat t$ we have eliminated at most $\lceil (np)^{1-4\xi}\rceil=o(k)$ color classes with size less than $(h(\calC)-2\xi)\log_b np$. Thus, at least $\frac{\xi k}{8}-o(k)$ color classes increased by at least $\xi\log_b{np}$ in the period between $t^{\ast}$ and $\hat t$ and we deduce that
$$\hat t-t^{\ast} \geq \left(\frac{\xi k}{8}-o(k)\right) \cdot \xi\log_b np = \Theta(n),$$
which concludes the proof of Theorem~\ref{thm:main}.
\qed

\section{Open Questions}

In this paper we obtained the asymptotic value of the game chromatic number of $G_{n,p}$ for sufficiently dense graphs by showing that $\chi_g(G_{n,p}) = (2+o(1))\chi(G_{n,p})$ holds if $p \ge e^{-o(\log n)}$. However, there is no a priori reason why a similar statement should not be true for smaller values of $p$ as well. A key ingredient of our proof is 
Lemma~\ref{l:color} in which we show that the algorithm {\sc ColorArranging} yields sets $S(v)$ such that the induced box game is a Maker's win. For its correctness proof we need that $p \ge e^{-o(\log n)}$. Expanding Theorem~\ref{thm:main} to smaller edge probabilities $p$ thus seems to require different arguments. We leave this to future work. 


\vspace{0.3cm}
\noindent
\textbf{Acknowledgment.} The authors would like to thank Henning Thomas and József Balogh for valuable and motivating discussions on topics related to this paper.

\bibliographystyle{acm}
\bibliography{referencelist}

\begin{thebibliography}{1}

\bibitem{alonspencer}
{\sc Alon, N., and Spencer, J.}
\newblock {\em The probabilistic method}, 3rd~ed.
\newblock Wiley, New York, 2008.

\bibitem{suda}
{\sc Bohman, T., Frieze, A.~M., and Sudakov, B.}
\newblock The game chromatic number of random graphs.
\newblock {\em Random Structures and Algorithms 32}, 2 (2008), 223--235.

\bibitem{bollo}
{\sc Bollobás, B.}
\newblock The chromatic number of random graphs.
\newblock {\em Combinatorica 8}, 1 (1988), 49--55.

\bibitem{erdoes}
{\sc Chvátal, V., and Erdős, P.}
\newblock Biased positional games.
\newblock {\em Annals of Discrete Math 2\/} (1978), 221--228.

\bibitem{neues}
{\sc Frieze, A.~M., Haber, S., and Lavrov, M.}
\newblock On the game chromatic number of sparse random graphs.
\newblock {\em SIAM J. Discrete Math. 27}, 2 (2013), 768--790.

\bibitem{corr}
{\sc Hamidoune, Y.~O., and Vergnas, M.~L.}
\newblock A solution to the box game.
\newblock {\em Discrete Mathematics 65}, 2 (1987), 157--171.

\bibitem{lucz}
{\sc Łuczak, T.}
\newblock The chromatic number of random graphs.
\newblock {\em Combinatorica 11}, 1 (1991), 45--54.

\end{thebibliography}

\end{document}